\theoremstyle{plain}
\newtheorem{theorem}{Theorem}[section]
\newtheorem{corollary}[theorem]{Corollary}
\newtheorem{lemma}[theorem]{Lemma}
\newtheorem{proposition}[theorem]{Proposition}
\newtheorem{definition}[theorem]{Definition}
\newtheorem{assumption}[theorem]{Assumption}
\theoremstyle{remark}
\newtheorem{remark}[theorem]{Remark}
\numberwithin{equation}{section}
\newcommand{\ind}{1\!\kern-1pt \mathrm{I}}
\newcommand{\rsto}{]\!\kern-1.8pt ]}
\newcommand{\lsto}{[\!\kern-1.7pt [}
\numberwithin{equation}{section}
\newcommand{\RR}{\mathbb{R}}
\newcommand{\NN}{\mathbb{N}}
\newcommand{\cC}{\mathcal{C}}
\newcommand{\cD}{\mathcal{D}}
\newcommand{\cF}{\mathcal{F}}
\newcommand{\cM}{\mathcal{M}}
\newcommand{\cY}{\mathcal{Y}}
\newcommand{\massP}{\mathbf{P}}
\newcommand{\massQ}{\mathbf{Q}}
\newcommand{\massE}{\mathbf{E}}
\newcommand{\massL}{L}
\newcommand{\linftys}{(L^\infty)^*}
\newcommand{\linftysp}{(L^\infty)^*_+}
\newcommand{\infseq}{\}_{n=1}^\infty}
\newcommand{\conv}{\textnormal{conv}}
\begin{document}
\title[Dual problem of utility maximization] {On the dual problem of utility maximization\\ in incomplete markets}
\author{Lingqi Gu}
\author{Yiqing Lin}
\author{Junjian Yang}
\address{\noindent University of Vienna\newline \indent Faculty of Mathematics\newline\indent Oskar-Morgenstern Platz 1\newline \indent A-1090 Wien, Austria\newline}
\email{yiqing.lin@univie.ac.at}

\begin{abstract}
In this paper, we study the dual problem of the expected utility maximization in incomplete markets with bounded random endowment. We start with the problem formulated in \cite{CSW01} and prove the following statement: in the Brownian framework, the countably additive part $\widehat{Q}^r$ of the dual optimizer $\widehat{Q}\in (\massL^\infty)^*$ obtained in \cite{CSW01} can be represented by the terminal value of a supermartingale deflator $Y$ defined in \cite{KS99}, which is a local martingale. 
\end{abstract}

\keywords{Utility maximization, random endowment, primal-dual approach, dual optimizer}
\date{\today}
\subjclass[2000]{91B28; 91B16}
\thanks{The authors gratefully acknowledge support from the European Research Council (ERC) under grant FA506041. }
\maketitle 


\section{Introduction}
\noindent Optimal investment is a classical problem in mathematical finance, which concerns an economic agent who invests in a financial market so as to maximize the expected utility of his terminal wealth. In this paper, we consider the utility maximization problem in general semimartingale markets and focus on the primal-dual approach, precisely, we are interested in properties of the solution to the dual problem.

\vspace{2mm}

 A complete review of the literature on optimal investment is too extensive, so we only concentrate on those of immediate interest. In the semimartingale framework, 
 Kramkov and Schachermayer study the problem with a utility function $U$ supported on $\mathbb{R}_+$ in \cite{KS99}. They assume the agent is endowed with a deterministic initial wealth $x$ and
 can trade using admissible strategies
  - those which keep the corresponding wealth process bounded from below. 
 The core approach in \cite{KS99} is to construct an optimal trading strategy by solving a dual problem defined on $L^0$. The result in \cite{KS99} is subsequently generalized by Cvitani\'c et al.~\cite{CSW01} to allow for receiving a bounded random endowment $e_T$ other than the deterministic initial wealth, which, for example, can be an exogenous non-traded European contingent claim. In this case, the primal problem under consideration is formulated as
$$
u(x):=\sup_{g\in \mathcal{\mathcal{C}}}\massE[U(x+g+e_T)], \quad x\in \mathbb{R},$$
where $\mathcal{C}$ is the convex cone of all random variables dominated by admissible stochastic integrals. The authors of \cite{CSW01} employ the duality between $L^\infty$ and $\linftys$
and solve a minimization dual problem over the subset $\mathcal{D}$ of $\linftys$, which can be regarded as the weak-star closure of the set $\cM$ of equivalent local martingale measures (ELMMs). Precisely, the dual problem is formulated as 
\begin{equation}\label{posepro}
v(y):=\inf_{Q\in \mathcal{D}}\left\{\massE\left[V\left(y\frac{dQ^r}{d\massP}\right)\right]+y\langle Q, e_T\rangle\right\},\quad y>0, 
 \end{equation}
 where $V$ is the conjugate of $U$, $\massP$ is the physical probability measure, and $Q^r$ is the regular part of $Q\in\linftys$.
 It states in \cite{CSW01} that a dual optimizer $\widehat{Q}$ can be found  in $\mathcal{D}$, which is unique up to the singular part, and moreover the primal optimizer can be formulated in terms of $\widehat{Q}^r$.\\
 
 In \cite{HK04}, the authors relax the boundedness assumption on the random endowment and instead only require an integrability condition. See also \cite{Kha13} and \cite{GKM14}. 
 Another extension of \cite{CSW01} is provided by Karatzas and \v{Z}itkovi{\'c} in \cite{KZ03} by allowing for intertemporal consumption. \\

  Schachermayer treats the case of utility functions supporting both positive and negative wealth for a locally bounded risky asset without random endowment in \cite{Sch01}.
  Within this locally bounded semimartingale framework, Owen obtains a generalized result in \cite{Owe02} with a bounded random endowment and furthermore, Owen and \v{Z}itkovi\'c \cite{OZ09} consider unbounded claims. It is worth mentioning that with the settings of \cite{Sch01}, the dual optimizer is always a martingale measure but may not be equivalent, stated for example in \cite{BF02, Owe02, Sch01}. On the other hand, Biagini and Frittelli investigate a problem similar to \cite{Sch01} and allow for a general semimartingale model for the risky asset. Their utility maximization problem is established on a new domain in \cite{BF05} and is afterwards embedded in Orlicz spaces in \cite{BF08}. Eventually, Biagini et al.~\cite{BFG11} study the problem with unbounded random endowment by generalizing  the duality method in \cite{BF08}.\\
  

Besides the duality between the primal and dual problem, many authors are concerned with  the properties of the dual optimizer in the case when $U$ is defined on $\mathbb{R}_+$; particularly, the following representation conjecture is studied: {\it the dual optimizer can be attained by an equivalent local martingale deflator (for short ELMDs, see e.g.~\cite{KK07, Kar12}).}  When $e_T=0$,
Kramkov and Schachermayer observe that this conjecture fails for general semimartingale models, since there is an example showing that the associated process could only be a strict supermartingale (cf. Example 5.1' in \cite{KS99}).
However, once further conditions are imposed on $S$, one could have some positive results. For example,
Karatzas and \v{Z}itkovi{\'c} observe that this conjecture is  true for It\^o-process models in \cite{KZ03}, and in a recent paper of Horst et al.~\cite{HHIRZ14}, this dual optimal process is constructed via solutions of backward stochastic differential equations. Another important result is presented in \cite{LZ07}:  the representation conjecture is true for all continuous semimartingale models.\\

In the present paper, we study the regular part of the dual optimizer to (\ref{posepro}) and establish the following main result:
 \vspace{2mm}

 {\it If the underlying filtration is Brownian, then the regular part $\widehat{Q}^r$ of the dual optimizer $\widehat{Q}$ to (\ref{posepro}) can be attained by an equivalent local martingale deflator (ELMD).}
 
 \vspace{2mm}

Overall speaking, the present work generalizes the result in \cite{LZ07} to the case that $e_T$ is a bounded random variable,
which increases the complexity of the dynamics.
However, on the technical side,  our proof differs from the existing one. Indeed, the method of Larsen and  \v{Z}itkovi{\'c} is based on the representation theorem in \cite{DS95} for continuous arbitrage-free price processes and the multiplicative decomposition of supermartingale deflators. The authors examine the decomposition of the dual optimal process, and find that the  decreasing part vanishes and only the local martingale part left due to the maximality. By contrast, the dual domain in our case is extended and the definition of the dual problem (\ref{posepro}) involves $e_T$, so we even do not know a priori whether the dual optimizer is associated with a supermartingale deflator, and thus the argument via the decomposition of such processes is not easy to apply. 

 \vspace{2mm}

\vspace{2mm}
Indeed, our approach involves the study of both the primal and dual problems. 
We construct an optional strong supermartingale $\widehat{Y}$ associated with $\widehat{Q}^r$ using \cite{CS15strong} and a primal optimal process $\widehat{W}$, in particular, the random endowment $e_T$ is endowed with a fictional value process ${e}$. Then, we choose a proper sequence of stopping times $\{\tau_k\}_{k\in \mathbb{N}}$ such that $\widehat{W}_{\tau_k}$ is bounded away from 0. Comparing the expectation of $\widehat{Y}_{\tau_k}$ under $\massP$ and the expectation of $\widehat{W}_{\tau_k}$ under 
the finitely additive measure $\widehat{Q}$, 
we observe that during the evolution of $\widehat{Y}$, there is no mass that disappears until ${\tau_k}$ and so that $\widehat{Y}$ is a local martingale.

\vspace{2mm}

In this framework, even assuming that $S$ is a geometric Brownian motion, the dual optimal process may fail to be a true martingale (cf. Proposition 2.4 in Larsen et al.~\cite{LSZ15}).

\vspace{2mm}

The rest of this paper is organized as follows: in the next section, we recall basic settings and main results in \cite{CSW01}; Section 3 is devoted to the introduction of our main theorem  while Section 4 contains the detailed proof. Finally, Section 5 provides with some technical results and an alternative proof of Proposition 3.2 in \cite{LZ07}.


\section{Formulation of the problem}
 \noindent In this section, we shall recall the formulation of the utility maximization problem in incomplete markets with random endowment and briefly introduce the results obtained in \cite{CSW01}. 
 
 \vspace{2mm}

Consider the model of a financial market consisting of $d+1$ assets: one bond and $d$ stocks. Without loss of generality, we assume that the bond price is constant. The stock-price process $S=(S^i)_{1\leq i\leq d}$ is a strictly positive semimartingale on a filtered probability space $(\Omega, \cF, (\cF_t)_{0\leq t\leq T}, \massP)$ 
  satisfying the usual hypotheses of right continuity and saturatedness, where $\cF_0$ is assumed to be trivial. Here, $T$ is a finite time horizon. Unless otherwise specified, we employ the notations $\massL^1$, $\massL^\infty$ and $(\massL^\infty)^*$ to denote the corresponding spaces based on $(\Omega, \mathcal{F}_T, \massP)$. 
  
  \vspace{2mm}
  
 Assume that the agent is endowed with initial wealth $x\in\mathbb{R}$ and his investment strategy is denoted by $H=(H^i)_{1\leq i\leq d}$, which is a predictable $S$-integrable process specifying the number of shares of each stock held in his portfolio. We also assume that the agent receives an exogenous endowment $e_T$ at time $T$, which is $\cF_T$-measurable and satisfies $\rho:=\|e_T\|_{\infty}<\infty$. Then, the total value of his portfolio at time $T$ can be written into
 $$
 W_T=x+(H\cdot S)_T+e_T,
 $$
 where $(H\cdot S)_t=\int^t_0H_udS_u$ denotes the stochastic integral with respect to $S$.
 
 \vspace{2mm}
 
 We call $H$ an admissible strategy if the process $(H\cdot S)$ is uniformly bounded from below by a constant, and we denote by $\mathcal{C}_0$ the convex cone of $\mathcal{F}_T$-measurable random variables dominated by admissible stochastic integrals, i.e.,
 $$
 \cC_0:=\{g\in \cF_T: g\leq (H\cdot S)_T,\ {\rm for\ some\ admissible\ strategy}\ H\}.
 $$ 
 Moreover, we define $\cC:=\cC_0\cap \massL^\infty$.
 
 \vspace{2mm}

Suppose the agent's preferences over terminal wealth are modeled by a utility function $U:(0,\infty)\to\RR$, 
  which is strictly increasing, strictly concave, continuously differentiable and satisfies the Inada conditions:
  $$ U'(0) := \lim_{x\to 0}U'(x) = \infty \quad\textnormal{ and } \quad U'(\infty) := \lim_{x\to \infty}U'(x) = 0. $$
 Without loss of generality, we may assume $U(\infty)>0$ and define $U(x)=-\infty$, if $x\leq 0$. Then, the primal problem can be formulated in the following way:
\begin{equation}u(x)=\sup_{g\in \cC_0}\massE[U(x+g+e_T)],\quad\ x\in \mathbb{R}.\label{pp}\end{equation}
 
 \vspace{2mm}
 
We adopt the following assumption as in \cite{CSW01, KS99}, which 
 ensures a NFLVR setting (see \cite{DS94, DS98}).
 \begin{assumption}\label{NA}
 There exists at least one probability measure $\massQ\sim\massP$, such that for any $H$ admissible, $(H\cdot S)$ is a local martingale under $\massQ$. Namely, the set $\mathcal{M}$  of all ELMMs is not empty. 
 \end{assumption}  
 
To establish the dual problem, we first define the dual domain, which is a non-empty subset of $(\massL^\infty)^*_+$, convex and compact with respect to the weak-star topology $\sigma((\massL^\infty)^*,\massL^\infty)$:
\begin{equation}\label{dualdom}
\cD:=\{Q\in (\massL^\infty)^*_+: \|Q\|_{(\massL^\infty)^*}=1\ {\rm and}\ \langle Q, g\rangle \leq 0,\ {\rm for\ all}\ g\in \cC\}.
 \end{equation}
 
In fact, the space $(\massL^\infty)^*$ can be identified with the space of bounded additive measures. Each element in $(\massL^\infty)^*_+$ admits a unique Yosida-Hewitt decomposition $Q=Q^r+Q^s$, where the regular part $Q^r\in \massL^1$ is countably additive and the singular part $Q^s$ is purely finitely additive (see \cite{YH52}). Then, the dual problem can be formulated as
\begin{equation}\label{dp}
v(y):=\inf_{Q\in \cD}\left\{\massE^\massP\left[V\left(y\frac{dQ^r}{d\massP}\right)\right]+y\langle Q, e_T\rangle\right\},\quad y>0,
\end{equation}
where $V$ is the conjugate of $U$.

\vspace{2mm}

Throughout this paper, we assume moreover the following assumptions, which ensure the existence of solutions of the primal and dual problems. Detailed discussion on these assumptions  can be found in \cite{CSW01, KS99, KS03}.
 \begin{assumption} \label{U(x)assumption}
 The utility function $U$ satisfies the reasonable asymptotic elasticity, i.e.,
 $$ AE(U):= \limsup_{x\to\infty}\frac{xU'(x)}{{U}(x)}< 1. $$
\end{assumption}
\begin{assumption}\label{uass}
$|u(x)|<\infty$ holds for some $x>\rho$.
\end{assumption}
Now, we summarize the result obtained in \cite{CSW01} as the following theorem:
\begin{theorem}[Theorem 3.1 and Lemma 4.4 in \cite{CSW01}]\label{thmcsw}
  Under Assumptions \ref{NA}, \ref{U(x)assumption}, \ref{uass},
  we have
  \begin{enumerate}
   \item The primal value function $u$ is finitely valued and continuously differentiable on $(x_0,\infty)$, and $u(x)=-\infty$, for all $x<x_0$, where $x_0:=\sup_{Q\in\cD}\langle Q,-e_T\rangle$.
   \item The dual value function $v$ is finitely valued and continuously differentiable on $(0,\infty)$.
   \item The functions $u$ and $v$ are conjugate in the sense that 
        \begin{equation*}   \label{v=u-xy}
            v(y) = \sup_{x>x_0}\{u(x)-xy\}, \quad y>0,
        \end{equation*}
        \begin{equation*}   \label{u=v+xy}
            u(x) = \inf_{y>0}\{v(y)+xy\}, \quad x>x_0.
        \end{equation*}
   \item For all $y>0$, there exists a solution $\widehat{Q}_y\in\cD$ to the dual problem, which is unique up to the singular part. 
         For all $x>x_0$, $\widehat{g}:= I\Big(\hat{y}\frac{d\widehat{Q}^r_{\hat{y}}}{d\massP}\Big)-x-e_T$ is the solution to the primal problem, 
         where $I=-V'$ and $\hat{y}=u'(x)$, which attains the infimum of $\{v(y)+xy\}$. 
         There is a unique admissible trading strategy $\widehat{H}$ such that $\widehat{g}=(\widehat{H}\cdot S)_T$.
   \item The following equality is verified for the solutions of the primal and dual problems:
         \begin{equation}\label{optequ}
          \big\langle \widehat{Q}^r_{\hat{y}}, x+\big(\widehat{H}\cdot S\big)_T+e_T\big\rangle 
             = \big\langle \widehat{Q}_{\hat{y}}, x+\big(\widehat{H}\cdot S\big)_T+e_T\big\rangle
             = x+\big\langle \widehat{Q}_{\hat{y}}, e_T\big\rangle.
         \end{equation}
  \end{enumerate}
\end{theorem}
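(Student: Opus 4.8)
\medskip
\noindent\emph{Proof proposal.} The statement is the incomplete-market duality of \cite{CSW01}; the route I would take is the Kramkov--Schachermayer scheme carried out on the dual pair $(\massL^\infty,\linftys)$. The first task is to pin down the polarity between the primal and dual domains. Under the NFLVR Assumption \ref{NA} the convex cone $\cC=\cC_0\cap\massL^\infty$ is weak-star closed in $\massL^\infty$, so the bipolar theorem for $(\massL^\infty,\linftys)$ identifies $\cD$ of (\ref{dualdom}) as the slice $\{\scal{Q}{1}=1\}$ of the polar cone of $\cC$ in $\linftysp$ and, dually, shows that a bounded $g$ satisfies $\scal{Q}{g}\le0$ for all $Q\in\cD$ exactly when $g\in\cC$. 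Banach--Alaoglu together with weak-star closedness of the constraints $\scal{Q}{g}\le0$, $Q\ge0$, $\scal{Q}{1}=1$ makes $\cD$ weak-star compact. I would also record here the Yosida--Hewitt splitting $Q=Q^r+Q^s$, the weak-star continuity of $Q\mapsto\scal{Q}{e_T}$ (because $e_T\in\massL^\infty$), and that $\cM\subseteq\cD$ with each $R\in\cM$ purely regular.

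Next come finiteness and the existence of the dual minimiser. The reasonable asymptotic elasticity $AE(U)<1$ (Assumption \ref{U(x)assumption}) supplies the Kramkov--Schachermayer growth and integrability estimates on $V$, which with Assumption \ref{uass} make
\[
\Phi(y,Q):=\massE\!\left[V\!\left(y\,\frac{dQ^r}{d\massP}\right)\right]+y\,\scal{Q}{e_T}
\]
finite for every $y>0$, so $v(y)<\infty$; the threshold $x_0=\sup_{Q\in\cD}\scal{Q}{-e_T}$ is the superreplication price of $-e_T$ over $\cD$, and a Hahn--Banach separation shows that for $x<x_0$ no admissible wealth keeps $x+g+e_T$ in the effective domain of $U$ with positive probability, hence $u\equiv-\infty$ there. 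For fixed $y$ the map $Q\mapsto\Phi(y,Q)$ is convex and, crucially, weak-star lower semicontinuous: the $e_T$-term is continuous, while under a weak-star limit the regular part can only lose mass (mass escaping into the singular part), so $V$ being convex and decreasing gives the semicontinuity of the $V$-term by Fatou. Weak-star compactness of $\cD$ then produces a minimiser $\wh Q_y$, and strict convexity of $V$ makes $\wh Q_y^r$ and the optimal multiplier unique.

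For the conjugacy relations I would first solve the \emph{bounded} primal $\sup_{g\in\cC}\massE[U(x+g+e_T)]$: the Lagrangian $(g,Q)\mapsto\massE[U(x+g+e_T)]-y\scal{Q}{g}$ has the right concave/convex structure, and the weak-star compactness of $\cD$ is exactly what a minimax argument needs to exchange $\inf_Q$ and $\sup_g$, giving $v(y)=\sup_{x>x_0}\{u(x)-xy\}$ and its inverse; a truncation $g\mapsto g\wedge n$ with monotone convergence in $U$ then matches the bounded primal value with the one over $\cC_0$, and differentiability of $v$ (finiteness plus strict convexity) and of $u$ (conjugacy) follow. With $\hat y=u'(x)$, the candidate primal optimiser is $\hat g:=I\!\big(\hat y\,d\wh Q^r_{\hat y}/d\massP\big)-x-e_T$; the first-order condition for $\wh Q_{\hat y}$, tested against competitors in $\cM$, yields $\massE^{R}\big[I(\hat y\,d\wh Q^r_{\hat y}/d\massP)-e_T\big]\le x$ for all $R\in\cM$, so a superreplication argument via the optional decomposition theorem (truncating $I(\cdot)$ and then passing to the limit in the Fatou-closed cone $\cC_0$) shows $\hat g\in\cC_0$, i.e.\ $\hat g=(\wh H\cdot S)_T$ for a unique admissible $\wh H$; this $\hat g$ is primal optimal because, at $\eta=\hat y\,d\wh Q^r_{\hat y}/d\massP$, the Fenchel inequality $U(\xi)\le V(\eta)+\xi\eta$ holds with equality for $\xi=x+\hat g+e_T$. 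Finally, combining that equality with $v(\hat y)+x\hat y=u(x)$ forces $\scal{\wh Q^r_{\hat y}}{x+(\wh H\cdot S)_T+e_T}=x+\scal{\wh Q_{\hat y}}{e_T}$, and since $x+(\wh H\cdot S)_T+e_T=I(\hat y\,d\wh Q^r_{\hat y}/d\massP)\ge0$ the singular part of $\wh Q_{\hat y}$ carries no mass on it, which is the chain (\ref{optequ}).

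The soft convex analysis delivers most of this, and I expect two points to be the genuine work. The first is the weak-star lower semicontinuity and properness of $\Phi$: since $Q\mapsto Q^r$ is badly discontinuous one must argue through the mass-escape phenomenon and Fatou, and it is here --- and in the differentiability/conjugacy step --- that $AE(U)<1$ is indispensable. The second, and harder, is showing that $\hat g$ belongs to $\cC_0$ and that $\scal{\wh Q^s_{\hat y}}{x+(\wh H\cdot S)_T+e_T}=0$: this reconciles a merely finitely additive dual optimiser with a genuinely countably additive optimal terminal wealth, and both the superreplication step and the vanishing of the singular mass must exploit the boundedness of $e_T$ and the admissibility lower bound on the wealth process. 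These are the parts of \cite{CSW01} that do not reduce to abstract duality.
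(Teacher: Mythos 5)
The paper does not prove this statement — as the header indicates, it is quoted from Theorem 3.1 and Lemma 4.4 of \cite{CSW01} and serves only as input for Sections 3 and 4. Measured against the original Cvitani\'c--Schachermayer--Wang argument, your outline is a faithful high-level account of that proof: the bipolar identification of $\cD$ as the normalised polar of the weak-star closed cone $\cC$ in $\linftysp$ under NFLVR, Banach--Alaoglu for weak-star compactness of $\cD$, Yosida--Hewitt for the splitting $Q=Q^r+Q^s$, $AE(U)<1$ to control the conjugate $V$ and make $v$ proper and differentiable, a minimax exchange for conjugacy, the optional-decomposition superreplication step that puts $\hat g$ into $\cC_0$, and the observation that the singular mass must vanish on the optimal wealth once the pointwise Fenchel equality at the optimum is combined with $v(\hat y)+x\hat y=u(x)$. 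That last item is exactly how \eqref{optequ} falls out, and you have correctly identified it, together with superreplication, as the two places where the argument does not reduce to abstract convex duality.

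One genuine soft spot: weak-star convergence $Q^n\to Q$ in $\linftysp$ gives no pointwise control whatsoever on the densities $dQ^{n,r}/d\massP$, so the claimed lower semicontinuity of $Q\mapsto\massE\big[V\big(y\,dQ^r/d\massP\big)\big]$ does not follow from the heuristic that ``mass can only escape into the singular part'' combined with Fatou, as stated. In \cite{CSW01} (and in the present paper's Proposition 3.1 and Appendix, which revisit precisely this point) one instead takes a \emph{minimizing sequence} from $\cM$, passes to forward convex combinations \`a la Koml\'os to secure a.s.\ convergence of the densities, and only then applies Fatou and the $AE(U)$ integrability estimate; the weak-star topology enters to identify the limit in $\cD$, not to supply semicontinuity of the map to densities. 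The conclusion is the same, but the existence step as you have written it would not survive scrutiny and has to be replaced by this minimizing-sequence--plus--Koml\'os argument.
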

\begin{remark}\label{re25}
 \begin{enumerate}[(i)]
  \item Since the random variable $x+\big(\widehat{H}\cdot S\big)_T+e_T$ in Theorem \ref{thmcsw} is uniformly bounded from below, then $\big\langle\widehat{Q}_{\hat{y}}, x+\big(\widehat{H}\cdot S\big)_T+e_T\big\rangle$ is well defined by
$$
\big\langle\widehat{Q}_{\hat{y}}, x+\big(\widehat{H}\cdot S\big)_T+e_T\big\rangle:=\lim_{M\rightarrow \infty}\big\langle\widehat{Q}_{\hat{y}}, \big(x+\big(\widehat{H}\cdot S\big)_T+e_T\big)\wedge M\big\rangle,
$$  
although it is not necessarily an element in $\massL^\infty$.
  \item From the construction of the primal solution, one can see that $\frac{d\widehat{Q}^r_{\hat{y}}}{d\massP}>0$, $\massP$-a.s., so that $\widehat{Q}^r_{\hat{y}}\sim \massP$. 
  \item The equality of optimality (\ref{optequ}) shows that the purely finitely additive part $\widehat{Q}^s_{\hat{y}}$ ``concentrates'' its mass on the sets, 
$$
\left\{x+\big(\widehat{H}\cdot S\big)_T+e_T<\tfrac{1}{n}\right\},\ {\rm for\ any}\ n\in \mathbb{N}.
$$
 \end{enumerate}
\end{remark}

\section{Revisit the dual problem in \cite{CSW01}}
\noindent In this section, we will present our main result, i.e., in the Brownian framework, the countably additive part $\widehat{Q}^r$ of any dual optimizer $\widehat{Q}\in (\massL^\infty)^*$ obtained in \cite{CSW01} can be attained by the terminal value of a local martingale $\widehat{Y}$ which belongs to the set of all supermartingale deflators, defined by
\begin{align*}
\mathcal{Y}(1):=\{Y\ &{\rm is\ a\ positive\ semimartingale:}\ Y_0=1,\\
 &XY\ {\rm is\ a\ supermartingale,\ for\ any}\ X\in \mathcal{X}(1)\},
\end{align*}
where 
$$
\mathcal{X}(1):=\{1+(H\cdot S): 1+(H\cdot S)_t\geq 0,\ 0\leq t\leq T\}.
$$


We first observe that the dual optimizer for the problem (\ref{dp}) can be approximated by a sequence of ELMMs.
\begin{proposition}\label{prodm}
Let Assumptions \ref{NA}, \ref{U(x)assumption}, \ref{uass} hold. Let $\hat{y}:=u'(x)$. If $\widehat{Q}_{\hat{y}}$ is a dual optimizer (denoted by $\widehat{Q}$ for short) for the problem (\ref{dp}), then there exists a sequence $\{\massQ^n\infseq$ of ELMMs, such that
\begin{equation}\label{qet}
\frac{d\massQ^n}{d\massP}\rightarrow \frac{d{\widehat{Q}^r}}{d\massP},\ \massP-a.s.\ {\rm and}\ 
\langle \massQ^n, e_T\rangle\longrightarrow \langle \widehat{Q}, e_T\rangle,\ {\rm as}\ n\rightarrow \infty.
\end{equation}
\end{proposition}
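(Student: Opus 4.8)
The plan is to produce the approximating sequence of ELMMs by exploiting the fact that $\mathcal{D}$ is, by construction, the weak-star closure of $\cM$ (viewed inside $(\massL^\infty)^*$), together with the countable additivity of $\widehat{Q}^r$ and the boundedness of $e_T$. First I would recall that, since $\mathcal{D}$ is the $\sigma((\massL^\infty)^*,\massL^\infty)$-closure of $\{d\massQ/d\massP : \massQ\in\cM\}$ and the latter set is convex, there is a \emph{net} $(\massQ^\alpha)$ in $\cM$ with $\massQ^\alpha\to\widehat{Q}$ in the weak-star topology. Testing against indicators $\mathbf 1_A$, $A\in\cF_T$, gives $\massE[\mathbf 1_A\, d\massQ^\alpha/d\massP]\to\langle\widehat{Q},\mathbf 1_A\rangle$; the right-hand side equals $\widehat{Q}^r(A)$ because the purely finitely additive part $\widehat{Q}^s$ satisfies $\langle\widehat{Q}^s,\mathbf 1_A\rangle$ can be nonzero, so one has to be a little careful — but on a \emph{decreasing} sequence of sets $A_n\downarrow\emptyset$ one uses that $Q^s$ vanishes on large sets while $Q^r$ is countably additive. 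The cleaner route: $d\massQ^\alpha/d\massP\to d\widehat{Q}^r/d\massP$ in $\sigma(\massL^1,\massL^\infty)$, i.e.\ weakly in $L^1$; this is because for $f\in\llinfty$, $\langle\massQ^\alpha,f\rangle=\massE[f\,d\massQ^\alpha/d\massP]$ converges to $\langle\widehat{Q},f\rangle=\massE[f\,d\widehat{Q}^r/d\massP]+\langle\widehat{Q}^s,f\rangle$, and for the \emph{regular} part one identifies the weak-$L^1$ limit of the densities (any $\sigma(\massL^1,\llinfty)$-limit point of the $L^1$-bounded net $d\massQ^\alpha/d\massP$ must be countably additive, hence is $d\widehat{Q}^r/d\massP$).

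Next I would pass from a net to a sequence and from weak-$L^1$ to $\massP$-a.s.\ convergence. Extract a sequence $\massQ^n$ from the net whose densities converge weakly in $L^1$ to $d\widehat{Q}^r/d\massP$ and such that $\langle\massQ^n,e_T\rangle\to\langle\widehat{Q},e_T\rangle$ (both are countably many linear functionals applied along the net, so a diagonal argument picks out one sequence achieving both). Then apply a Komlós-type / Mazur argument: convex combinations $\wt\massQ^n\in\conv(\massQ^n,\massQ^{n+1},\dots)$ — still ELMMs, since $\cM$ is convex — can be chosen to converge $\massP$-a.s.\ to $d\widehat{Q}^r/d\massP$. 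One must check the convex combinations do not destroy the convergence $\langle\cdot,e_T\rangle\to\langle\widehat{Q},e_T\rangle$; this follows because $e_T\in\llinfty$ and convex combinations of a convergent real sequence converge to the same limit. Relabel $\wt\massQ^n$ as $\massQ^n$ to obtain \eqref{qet}.

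The genuine technical point — and where I expect the main obstacle — is the \textbf{identification of the weak-$L^1$ limit of the densities with $d\widehat{Q}^r/d\massP$} rather than with something that has also absorbed part of $\widehat{Q}^s$. Concretely: the net $d\massQ^\alpha/d\massP$ is bounded in $\massL^1$ but not uniformly integrable a priori, so it need not be weakly sequentially compact in $L^1$; its weak-star limit points in $(\massL^\infty)^*$ form the set $\mathcal{D}\ni\widehat{Q}$, which has both parts. The resolution is to use the Yosida–Hewitt structure together with a truncation: for each $M$, $(d\massQ^\alpha/d\massP)\wedge M$ is uniformly bounded, converges weak-star, and its regular/singular decomposition is controlled; letting $M\to\infty$ and using that $\widehat{Q}^s$ ``lives at infinity'' pins down the countably additive part of the limit as exactly $d\widehat{Q}^r/d\massP$. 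An alternative, possibly smoother, is to invoke directly that $\mathcal{D}$ is weak-star closed and apply a lemma (of the type in \cite{CSW01} or \cite{DS94}) asserting that the $L^1$-closure of $\{d\massQ/d\massP:\massQ\in\cM\}$ already contains $d\widehat{Q}^r/d\massP$; if such a statement is available it makes the whole proposition essentially immediate, reducing the work to the diagonal/Mazur bookkeeping for the two convergences in \eqref{qet}.
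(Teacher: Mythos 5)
Your plan reproduces the skeleton of the paper's argument (weak-star closure of $\cM$, diagonal argument, convex combinations), but the step you yourself flag as the genuine technical point is not actually closed, and as written it does not go through.

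The difficulty is the one you correctly identify: the net of densities $d\massQ^\alpha/d\massP$ is $L^1$-bounded but \emph{not} uniformly integrable, so it has no reason to possess a $\sigma(L^1,L^\infty)$-limit point. Your claim ``any $\sigma(L^1,L^\infty)$-limit point of the $L^1$-bounded net must be countably additive, hence is $d\widehat{Q}^r/d\massP$'' is vacuous unless such a limit point is first shown to exist, and then your next step (``extract a sequence $\massQ^n$ from the net whose densities converge weakly in $L^1$'') is asserting precisely what needs to be proved. Your two proposed repairs do not close the gap either. The truncation route is not well behaved: $x\mapsto x\wedge M$ is nonlinear, so it does not commute with weak-star limits, and there is no easy identification of the weak-$L^1$ limit of $(d\massQ^\alpha/d\massP)\wedge M$ with $(d\widehat{Q}^r/d\massP)\wedge M$. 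The other route, ``invoke a lemma that the $L^1$-closure of $\{d\massQ/d\massP\}$ contains $d\widehat{Q}^r/d\massP$,'' is circular: that lemma \emph{is} the content of the proposition. (A minor further gap: you take $\cD=\overline{\cM}^{w^*}$ as ``by construction,'' but $\cD$ is defined by polarity conditions in (\ref{dualdom}), not as a closure; the identification requires a Hahn--Banach separation argument plus superreplication, which the paper does spell out.)

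The device that actually resolves the issue, and which your proposal is missing, is to replace truncation by \emph{restriction to events that kill the singular part}. From Yosida--Hewitt, choose an increasing sequence $A_n\in\cF$ with $\massP(A_n)>1-2^{-n}$ on which $\widehat{Q}^s$ vanishes. Restriction to $A_n$ is a bounded linear map that commutes with the weak-star limit, and $\widehat{Q}|_{A_n}=\widehat{Q}^r|_{A_n}\in L^1$. So $\widehat{Q}^r|_{A_n}$ is in the weak-star closure of the convex set $\{d\massQ/d\massP\,\mathbf 1_{A_n}:\massQ\in\cM\}\subset L^1$; since weak-star and $\sigma(L^1,L^\infty)$ agree on $L^1$ and the set is convex, Mazur gives $\widehat{Q}^r|_{A_n}$ in the $L^1$-norm closure, so one can pick $\massQ^n\in\cM$ with $\|\,d\massQ^n/d\massP\,\mathbf 1_{A_n}-\widehat{Q}^r\mathbf 1_{A_n}\|_{L^1}<2^{-n}$. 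Since $\massP(A_n)\to1$ this yields $d\massQ^n/d\massP\to d\widehat{Q}^r/d\massP$ $\massP$-a.s.\ along a subsequence; intersecting with the finitely many constraints $|\langle\massQ,e_T\rangle-\langle\widehat{Q},e_T\rangle|<1/n$ inside each approximation step and diagonalizing gives the second convergence in (\ref{qet}). This is exactly the paper's Proposition \ref{propcont} and Corollary \ref{l1linfty}, and it is the ingredient your proposal lacks.
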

\begin{proof}
First, we claim that $\cD$ is the weak-star closure of $\mathcal{M}$, which can be regarded as a subset of $\linftys$ via the canonical embedding. For the convenience of the reader, we shall briefly prove this claim.  Indeed, $\cD \supseteq  \overline{\mathcal{M}}^{\sigma((L^\infty)^*, L^\infty)}$ is trivial. To show the equality, we suppose, contrary to the claim, there exists a point $\overline{Q}\in \cD$ but
$
\overline{Q}\notin\overline{\mathcal{M}}^{\sigma((L^\infty)^*, L^\infty)}
$.
From \cite{CSW01}, $\cD$ is compact, thus by the Hahn-Banach separation theorem, one can find a function $f\in L^\infty$ and a constant $\alpha$ such that 
$$
\langle Q, f\rangle \leq \alpha,\ {\rm for\ all}\ Q\in \overline{\mathcal{M}}^{\sigma((L^\infty)^*, L^\infty)},$$
but
\begin{equation}\label{contrad}
\langle \overline{Q}, f\rangle >\alpha.
\end{equation}
Applying the superreplication theory, we conclude $f-\alpha\in \mathcal{C}$ and thus, from the definition of $\mathcal{D}$,  we have $\langle \overline{Q}, f\rangle \leq \alpha$, which is in contradiction to (\ref{contrad}). \\

Then, it follows from Corollary \ref{l1linfty} that we can find a sequence $\{\massQ^n\infseq\subset\mathcal{M}$, such that (\ref{qet}) holds.
\end{proof}
\begin{remark}
For any cluster point $Q^\star$ of the sequence $\{\massQ^n\infseq$ in Proposition \ref{prodm}, $Q^\star$ is a dual optimizer for (\ref{dp}) by Proposition A.1 in \cite{CSW01}.
\end{remark}

Letting $\{\massQ^n\}_{n=1}^\infty$ be the sequence chosen in Proposition \ref{prodm}, define for each $n\in\NN$
$$
Y^n_t:=\massE^{\massP}\left[\frac{d\massQ^n}{d\massP}\bigg|\mathcal{F}_t\right],
$$
which is the density process of $\massQ^n$ and is a strictly positive martingale.\\

We recall the definition of optional strong supermartingales. These processes are introduced by Mertens \cite{Mer72} as a generalization of c\`adl\`ag supermartingales. We also refer to Appendix I of \cite{DM82} for more properties of these processes. 
\begin{definition}
A real-valued stochastic process $Y= (Y_t)_{0 \leq t \leq T}$ is called an
{\it optional strong supermartingale}, if
\begin{enumerate}
 \item $Y$ is optional;
 \item $Y_{\tau}$ is integrable for every $[0,T]$-valued stopping time
$\tau$;
 \item For all stopping times $\sigma$ and $\tau$ with $0 \leq \sigma \leq
\tau \leq T$, we have
    $$Y_{\sigma} \geq \massE \left[ Y_{\tau} | \mathcal F_{\sigma} \right].$$
\end{enumerate}
\end{definition} 

By Theorem 2.7 in \cite{CS15strong}, there exists a sequence $\{\widetilde{Y}^n\}_{n=1}^\infty$ of convex combinations $\widetilde{Y}^n\in \conv(Y^n, Y^{n+1}, \ldots)$, and a non-negative optional strong supermartingale $\widehat{Y}$ (not necessarily c\`adl\`ag),
such that for every $[0, T]$-valued stopping time $\sigma$, we have
\begin{equation}\label{pro}
\widetilde{Y}^n_\sigma\stackrel{\massP}{\longrightarrow}  \widehat{Y}_\sigma,\ {\rm as}\ n\rightarrow \infty.
\end{equation}
Obviously,  
$$\frac{d\widetilde{\massQ}^n}{d\massP}=\widetilde{Y}^n_T\longrightarrow \widehat{Y}_T=\frac{d{\widehat{Q}}^r}{d\massP},\ \massP-a.s.,\ {\rm as}\ n\rightarrow \infty, $$
where $d\widetilde{\massQ}^n=\widetilde{Y}^n_Td\massP$. In the remainder of this paper, our main goal is to show the claim that
\begin{equation}\label{goal}\widehat{Y}\ {\rm is\ a\ local\ martingale,}
\end{equation} 
under the following assumption:
\begin{assumption}\label{assfil}
 The underlying filtration $(\mathcal{F}_t)_{0\leq t\leq T}$ is generated by a Brownian motion.
\end{assumption}
Once the claim (\ref{goal}) is verified, we know from the above assumption that $\widehat{Y}$ is continuous and thus a (c\`adl\`ag) supermartingale. By a similar argument as in the proof of Lemma 4.1 in \cite{KS99}, namely, for any $X\in \mathcal{X}(1)$, applying Theorem 2.7 in \cite{CS15strong} again, one can see that $X\widehat{Y}$ is still a supermartingale, which implies $\widehat{Y}\in\mathcal{Y}(1)$.

\begin{remark}
One can also apply the well-known Fatou-convergence result (see Lemma 5.2 in \cite{FK97}) to construct $\widehat{Y}'$ as the Fatou limit of $\{\widetilde{Y}^n\}_{n\in \mathbb{N}}$, whose terminal value is exactly the density $\frac{d{\widehat{Q}}^r}{d\massP}$. Although the process $\widehat{Y}'$ constructed in this way is certainly c\`adl\`ag, 
 yet (\ref{pro}) may fail. Therefore, the advantage of the result in \cite{CS15strong} is that we could find a unified sequence which is not only the limit of $\widetilde{Y}^n$ at the terminal time $T$ but also at any intermediate time.
Particularly, one can pick a subsequence such that the convergence holds $\massP$-a.s.~at countably many times.
 Note that the difference between the two kinds of limit is only on the graph of countably many stopping times (see Section A.1 in \cite{CSY15}).
\end{remark}


\begin{remark}
Under the above assumption, every local martingale has a continuous modification. 
In particular, from Assumption \ref{NA}, the stock-price process $S$ in our setting is indeed continuous. It is not clear to us whether this assumption  is really necessary for the following theorem or it could be weakened. We leave this as an open question. 
\end{remark}
Now, we are ready to state our main result. Its proof is postponed to the next section.
\begin{theorem}\label{main}
Under Assumptions \ref{NA}, \ref{U(x)assumption}, \ref{uass}, \ref{assfil}, the process $\widehat{Y}$ defined in (\ref{pro}) is a local martingale and thus, the regular part $\widehat{Q}^r$ of any dual optimizer obtained in \cite{CSW01} can be attained by a local martingale, which belongs to $\mathcal{Y}(1)$.
\end{theorem}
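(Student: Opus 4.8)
The plan is to realize $\widehat Y$ as an optional strong supermartingale for which \emph{no mass is lost} on a suitable sequence of stochastic intervals, and then invoke the dichotomy that a nonnegative optional strong supermartingale whose expectation is constant along an exhausting sequence of stopping times must be a local martingale. The starting point is the optimality relation \eqref{optequ}, which says that $\langle\widehat Q,\widehat W_T\rangle=x+\langle\widehat Q,e_T\rangle=\langle\widehat Q^r,\widehat W_T\rangle$, where $\widehat W:=x+(\widehat H\cdot S)+e$ and $e$ is a c\`adl\`ag version of $\massE^{\widehat Q^r}[e_T\mid\mathcal F_\cdot]$ (this is where the ``fictitious value process'' of $e_T$ enters). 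The crucial consequence is that the singular part $\widehat Q^s$ is concentrated, in the sense of Remark \ref{re25}(iii), on the sets $\{\widehat W_T<1/n\}$; equivalently $\langle\widehat Q^s,\widehat W_T\wedge c\rangle$ is controlled by $c\,\langle\widehat Q^s,\mathbf 1_{\{\widehat W_T<1/n\}}\rangle\to 0$ jointly, so on events where $\widehat W_T$ stays bounded away from $0$ the functional $\widehat Q$ and its regular part agree.

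The key steps, in order. First, using Assumption \ref{assfil} so that $S$ and all the martingales $Y^n$ (hence $\widehat Y$) are continuous, apply Theorem 2.7 of \cite{CS15strong} to the density processes $Y^n$ of the approximating ELMMs $\massQ^n$ from Proposition \ref{prodm} to obtain the optional strong supermartingale $\widehat Y$ with $\widehat Y_T=d\widehat Q^r/d\massP$ and the pointwise-in-time convergence \eqref{pro}. Second, build the primal optimal wealth process $\widehat W=x+(\widehat H\cdot S)+e>0$ and, since $\widehat W_0=x+\langle\widehat Q^r,e_T\rangle-\langle\widehat Q^s,e_T\rangle$ may need care, work instead with the strictly positive continuous process $\widehat W$ on $[0,T)$ and note $\widehat W_T=\widehat g+x+e_T>0$ $\massP$-a.s. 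Third, define the stopping times
\begin{equation*}
\tau_k:=\inf\Bigl\{t\in[0,T]:\widehat W_t\leq \tfrac1k\Bigr\}\wedge T,
\end{equation*}
so that $\widehat W_{\tau_k}\geq 1/k$ on $\{\tau_k<T\}$ and, because $\widehat W_T>0$, $\tau_k\nearrow T$ and $\widehat W_{\cdot\wedge\tau_k}\geq 1/k$ for $k$ large; thus $\widehat W_{\tau_k}$ is bounded away from $0$ for each fixed $k$. Fourth — the heart of the argument — compare $\massE^\massP[\widehat Y_{\tau_k}]$ with $\langle\widehat Q,\widehat W_{\tau_k}\rangle$: on one hand the supermartingale property of each $\widetilde Y^n$ combined with \eqref{pro} and Fatou gives $\massE^\massP[\widehat Y_{\tau_k}]\leq \massE^\massP[\widehat Y_0]=1$; on the other hand, using that $\widehat W_{\cdot\wedge\tau_k}\in\mathcal X(1)$ up to scaling and that $\widehat Q\in\mathcal D$ annihilates $\mathcal C$ together with the fact that $\widehat Q^s$ gives no mass to $\{\widehat W_{\tau_k}\geq 1/k\}$, one deduces $\langle\widehat Q,\widehat W_{\tau_k}\rangle=\langle\widehat Q^r,\widehat W_{\tau_k}\rangle=\massE^\massP[\widehat Y_{\tau_k}\,\widehat W_{\tau_k}]$; running the same estimate backwards from \eqref{optequ} forces $\massE^\massP[\widehat Y_{\tau_k}]=1$ for all $k$. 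Fifth, a nonnegative optional strong supermartingale $\widehat Y$ with $\widehat Y_0=1$ and $\massE^\massP[\widehat Y_{\tau_k}]=1$ along $\tau_k\nearrow T$ has all its Mertens/Doob decreasing part equal to zero on $[0,T)$, hence is a local martingale on $[0,T)$; continuity (Assumption \ref{assfil}) extends this to $[0,T]$ and upgrades $\widehat Y$ to a continuous supermartingale, which lies in $\mathcal Y(1)$ by the $\mathcal X(1)$-test as indicated before the statement.

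The main obstacle is the fourth step: transferring the scalar optimality identity \eqref{optequ}, which lives at the terminal time and mixes the regular and singular parts of $\widehat Q$, into a statement at the stopping times $\tau_k$. The subtlety is twofold — first, $\widehat Q$ is only a finitely additive functional, so ``$\langle\widehat Q,\widehat W_{\tau_k}\rangle$'' must be given meaning via the monotone truncation procedure of Remark \ref{re25}(i) and shown to equal $\langle\widehat Q^r,\widehat W_{\tau_k}\rangle$ precisely because $\widehat W_{\tau_k}$ is bounded below away from $0$ so that the singular mass (concentrated near $\{\widehat W_T=0\}$) does not see it; second, one must verify that stopping the primal optimizer at $\tau_k$ produces an element to which the duality pairing can be applied in the right direction, i.e.\ that $\widehat W_{\tau_k}$ can be written (after normalization) as the terminal value of a process in $\mathcal X(1)$ together with the bounded piece coming from $e$, so that $\langle\widehat Q,\cdot\rangle$ is still $\leq$ its $\massP$-expectation against $\widehat Y$. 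Making the role of the fictitious endowment process $e$ rigorous — in particular that $e$ is chosen under $\widehat Q^r$ and that $\widehat W+$ (something bounded) is a genuine admissible wealth — is the place where the bounded-random-endowment generality costs real work compared to \cite{LZ07}.
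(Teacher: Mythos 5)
Your proposal captures the correct high-level strategy — stop when $\widehat W$ is bounded away from zero and show that $\widehat Y$ loses no mass up to $\tau_k$ — and this is indeed the route the paper takes. However, the fourth step as you describe it has a genuine gap, and the ``subtlety'' you flag at the end is in fact where the whole argument turns, so let me be concrete about why your outline does not yet close it.

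The problem is your claim that $\langle \widehat Q, \widehat W_{\tau_k}\rangle = \langle \widehat Q^r, \widehat W_{\tau_k}\rangle$ because ``$\widehat Q^s$ gives no mass to $\{\widehat W_{\tau_k}\geq 1/k\}$''. Remark \ref{re25}(iii) says $\widehat Q^s$ concentrates on $\{\widehat W_T < 1/n\}$, which is an $\mathcal F_T$-statement about the \emph{terminal} wealth; by construction $\widehat W_{\tau_k}\geq 1/k$ everywhere, including on $\{\widehat W_T < 1/n\}$, so $\widehat Q^s$ does see $\widehat W_{\tau_k}$ and the pairing $\langle \widehat Q^s, \widehat W_{\tau_k}\rangle$ need not vanish a priori. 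What is really needed is that the Yosida--Hewitt decomposition of the \emph{restriction} $\widehat Q|_{\mathcal F_{\tau_k}}$ has vanishing singular part, and this is not the same thing: restricting a purely finitely additive measure to a sub-$\sigma$-algebra can produce a nonzero countably additive part, and conversely $\widehat Q^r|_{\mathcal F_{\tau_k}}$ need not have density $\widehat Y_{\tau_k}$. The paper's proof does not work with $\widehat Q$ directly at time $\tau_k$: it constructs, via Koml\'os plus Egorov, a \emph{new} dual optimizer $Q^\star$ (a cluster point of a carefully chosen subsequence adapted to the fixed $\tau_k$), proves $\widehat Y_{\tau_k} = d(Q^\star|_{\mathcal F_{\tau_k}})^r/d\massP$ (Proposition 4.7), proves that $e_{\tau_k}$ is the $Q^\star$-conditional expectation of $e_T$ (Lemma 4.9), and only then derives the dynamic optimality identity that forces $(Q^\star|_{\mathcal F_{\tau_k}})^s\equiv 0$. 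This stopping-time-dependent construction is the key technical idea that your outline is missing.

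Two further points worth flagging. First, you define the fictitious endowment process as a c\`adl\`ag version of $\massE^{\widehat Q^r}[e_T\mid\mathcal F_\cdot]$, but $\widehat Q^r$ is not a probability measure (its mass is $1-\|\widehat Q^s\|$), and this definition does not obviously match the one the argument needs, namely $e_t=\massP$-$\lim_n\massE^{\widetilde{\massQ}^n}[e_T\mid\mathcal F_t]$; the latter is what makes the process $\widehat Z = \widehat Y\widehat W$ an optional strong supermartingale limit to which the martingale argument of Proposition \ref{zmart} applies. Second, the identity $\massE^\massP[\widehat Y_{\tau_k}\widehat W_{\tau_k}] = x+\langle\widehat Q,e_T\rangle$ is not immediate from \eqref{optequ}; it requires showing that $\widehat Z$ is a true martingale, which the paper does by comparing $\widehat Z_0$ and $\massE^\massP[\widehat Z_T]$. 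Your sketch invokes this implicitly but does not establish it.
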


\section{Proof of Theorem \ref{main}}
\noindent In this section, we shall prove Theorem \ref{main}. We break the proof into three main steps. In the sequel, each subsection stands for a step. 

\subsection{The fictional optimal wealth process} In a first stage, we construct a fictional optimal wealth process $\widehat{W}$, which attains the optimal terminal value $x+\big(\widehat{H}\cdot S\big)_T+e_T$. Then, we look for a sequence of stopping times, such that at each stopping time, the process $\widehat{W}$ is bounded away from 0.

\vspace{2mm}

Define
$$
\widetilde{W}^n_t:=x+(\widehat{H}\cdot S)_t+\massE^{\widetilde{\massQ}^n}[e_T|\mathcal{F}_t],\quad 0\leq t\leq T,\quad {\rm for\ all}\ n\in \mathbb{N}.
$$
Since $\mathcal{M}$ is closed with respect to convex combination, $\widetilde{\massQ}^n$ is still an ELMM, so that $\widetilde{W}^n$ is a $\widetilde{\massQ}^n$-supermartingale. It follows from the optimality of $\widehat{H
}$ that $\widetilde{W}^n_T=x+(\widehat{H}\cdot S)_T+e_T>0$, $\massP$-a.s., which holds also $\widetilde{\massQ}^n$-a.s., since $\massP$ and $\widetilde{\massQ}^n$ are equivalent. By Theorem VI-17 in \cite{DM82}, one can deduce that 
\begin{equation}\label{wmin}
 \inf_{0\leq t\leq T}\widetilde{W}^n_t>0,\ \widetilde{\massQ}^n-a.s., 
\end{equation}
which holds also $\massP$-a.s.

\vspace{2mm}

Consider the process $$\widetilde{Y}^n_t\widetilde{W}^n_t=\widetilde{Y}^n_t\big(x+(\widehat{H}\cdot S)_t\big)+\massE^{\massP}\big[\widetilde{Y}^n_Te_T|\mathcal{F}_t\big],\quad 0\leq t\leq T, $$
which is obviously a strictly positive $\massP$-supermartingale. Applying Theorem 2.7 in \cite{CS15strong} again, there exists a sequence of convex combinations of $\{\widetilde{Y}^n\infseq$, denoted still by $\{\widetilde{Y}^n\infseq$, and a non-negative optional strong supermartingale $\widehat{Z}$, such that for every $[0,T]$-valued stopping time $\sigma$ we have
\begin{equation}\label{wcon}
\widetilde{Y}^n_\sigma\widetilde{W}^n_\sigma\stackrel{\massP}{\longrightarrow}  \widehat{Z}_\sigma,\ {\rm as}\ n\rightarrow \infty.
\end{equation}
It is evident that (\ref{pro}) still holds for $\{\widetilde{Y}^n\infseq$ as well.

\begin{proposition} 
  The process $\widehat{W}:={\widehat{Z}}/{\widehat{Y}}$ is well defined. 
\end{proposition}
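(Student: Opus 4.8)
The plan is to show that $\widehat Y$ is strictly positive everywhere on $[0,T]$, so that one may divide by it, and that $\widehat Z$ is a.s.\ finite, so that $\widehat W:=\widehat Z/\widehat Y$ is a well-defined $[0,\infty)$-valued optional process (and, incidentally, carries the optimal terminal wealth at time $T$).

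First I would record the two facts about $\widehat Y$ already available from the construction preceding Theorem~\ref{main}: it is a non-negative optional strong supermartingale with $\widehat Y_0=1$, and its terminal value is $\widehat Y_T=\frac{d\widehat Q^r}{d\massP}$, which is strictly positive $\massP$-a.s.\ by Remark~\ref{re25}(ii). From this I would deduce that $\widehat Y_\sigma>0$ $\massP$-a.s.\ for \emph{every} $[0,T]$-valued stopping time $\sigma$. Indeed, the strong supermartingale inequality applied to the pair $\sigma\leq T$ gives $\widehat Y_\sigma\geq\massE[\widehat Y_T\mid\cF_\sigma]\geq 0$; hence, with $B:=\{\widehat Y_\sigma=0\}\in\cF_\sigma$, one has $\massE[\widehat Y_T\Ind{B}]=\massE[\massE[\widehat Y_T\mid\cF_\sigma]\Ind{B}]\leq\massE[\widehat Y_\sigma\Ind{B}]=0$, and since $\widehat Y_T>0$ a.s.\ this forces $\massP(B)=0$. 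This is nothing but the usual absorption property of non-negative supermartingales, read off at stopping times, which is all that is available when the process is only optional and not necessarily c\`adl\`ag.

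Next I would upgrade this statement from individual stopping times to the whole of $\Omega\times[0,T]$. The set $A:=\{(\omega,t):\widehat Y_t(\omega)=0\}$ is optional because $\widehat Y$ is an optional process; if its projection onto $\Omega$ had positive probability, the optional section theorem would produce a $[0,T]$-valued stopping time $\sigma$ with $\massP(\widehat Y_\sigma=0)>0$, contradicting the previous step. Hence $\widehat Y>0$ on $[0,T]$, $\massP$-a.s. The same section argument, applied to $\widehat Z$ — which is a non-negative optional strong supermartingale, so that $\widehat Z_\tau\in\massL^1$, in particular $\widehat Z_\tau<\infty$ a.s., for every stopping time $\tau$ — shows that $\widehat Z<\infty$ on $[0,T]$, $\massP$-a.s. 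Combining the two, $\widehat W:=\widehat Z/\widehat Y$ is a well-defined non-negative optional process on $[0,T]$; moreover, evaluating at $t=T$, the a.s.\ convergences $\widetilde Y^n_T\to\widehat Y_T$ and $\widetilde Y^n_T\widetilde W^n_T\to\widehat Z_T$ together with $\widetilde W^n_T=x+(\widehat H\cdot S)_T+e_T$ yield $\widehat W_T=x+(\widehat H\cdot S)_T+e_T$, so $\widehat W$ does attain the optimal terminal value.

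I expect the only genuine obstacle to be the passage from ``$\widehat Y_\sigma>0$ for each stopping time $\sigma$'' to ``$\widehat Y>0$ identically on $[0,T]$'': since $\widehat Y$ need not be c\`adl\`ag, one cannot run the usual trajectory-wise absorption argument and must instead invoke the optional section theorem. Everything else — the supermartingale inequality, the integrability of $\widehat Z$ at stopping times, and the terminal identification — is elementary.
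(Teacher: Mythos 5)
Your proof is correct and follows essentially the same route as the paper: use $\widehat{Y}_T=\frac{d\widehat{Q}^r}{d\massP}>0$ $\massP$-a.s.\ together with the non-negative strong supermartingale property of $\widehat{Y}$ to conclude strict positivity on all of $[0,T]$. The difference is purely in how that positivity is established. The paper simply invokes the absorption theorem for non-negative supermartingales in the form valid for optional strong supermartingales (Theorem~VI-17 together with Appendix~I, Remark~5, of Dellacherie--Meyer), and thereby gets the slightly stronger conclusion $\inf_{0\leq t\leq T}\widehat{Y}_t>0$ $\massP$-a.s.\ (which is also recorded as (\ref{lby}) and reused later, e.g.\ in Corollary~\ref{equiv}). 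You instead rederive the needed positivity from first principles: the strong supermartingale inequality at a stopping time $\sigma$ gives $\widehat{Y}_\sigma\geq\massE[\widehat{Y}_T\mid\cF_\sigma]$, hence $\widehat{Y}_\sigma>0$ a.s., and the optional section theorem then upgrades this to $\widehat{Y}_t(\omega)>0$ for all $t$, a.s.\ $\omega$. This is self-contained and correct, and you are right that the section theorem is the natural substitute for the trajectory-wise absorption argument once c\`adl\`ag paths are unavailable; the only thing it buys less of is the statement about the pathwise infimum, which is not needed for well-definedness of $\widehat{W}$ but is what the paper records for later use. Your extra check that $\widehat{Z}<\infty$ on $[0,T]$ a.s.\ (from $\widehat{Z}_\tau\in\massL^1$ at every stopping time plus the section theorem) is a correct and harmless addition that the paper leaves implicit, and your terminal identification $\widehat{W}_T=x+(\widehat{H}\cdot S)_T+e_T$ is also correct, though the paper defers it to the proof of Proposition~\ref{zmart}.
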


\begin{proof}
 Since $\widehat{Y}_T=\frac{d{\widehat{Q}}^r}{d\massP}$, as stated in Remark \ref{re25} (ii), we have that $\widehat{Y}_T>0$, $\massP$-a.s. Then, one can employ the same argument as (\ref{wmin}) to deduce  (see Theorem VI-17 and Appendix I Remark 5 in \cite{DM82})
\begin{equation}\label{lby}
\inf_{0\leq t\leq T}\widehat{Y}_t>0,\ \massP-a.s.,
\end{equation}
which implies that $\widehat{W}$ is well defined. 
\end{proof}
\begin{proposition}\label{zmart}
The process $\widehat{Z}$ is a martingale and from Assumption \ref{assfil} it has a continuous modification.
\end{proposition}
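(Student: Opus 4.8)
The plan is to identify $\widehat{Z}$ as a nonnegative optional strong supermartingale whose terminal expectation equals its initial value, and conclude it is a martingale. First I would observe that each $\widetilde{Y}^n_\cdot\widetilde{W}^n_\cdot$ is a strictly positive $\massP$-supermartingale with
$$
\massE^\massP\big[\widetilde{Y}^n_T\widetilde{W}^n_T\big]=\massE^\massP\big[\widetilde{Y}^n_T\big(x+(\widehat H\cdot S)_T+e_T\big)\big]=\big\langle\widetilde{\massQ}^n,\,x+(\widehat H\cdot S)_T+e_T\big\rangle,
$$
and that the right-hand side converges, as $n\to\infty$, to $\langle\widehat Q,x+(\widehat H\cdot S)_T+e_T\rangle=x+\langle\widehat Q,e_T\rangle$ by Proposition \ref{prodm} (using $\frac{d\widetilde\massQ^n}{d\massP}=\widetilde Y^n_T\to\widehat Y_T$ a.s.\ together with the convergence $\langle\widetilde\massQ^n,e_T\rangle\to\langle\widehat Q,e_T\rangle$ and the uniform lower bound on the integrand). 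On the other hand, $\widetilde Y^n_0\widetilde W^n_0=x+\langle\widetilde\massQ^n,e_T\rangle\to x+\langle\widehat Q,e_T\rangle$ as well. Hence the supermartingales $\widetilde Y^n\widetilde W^n$ asymptotically ``lose no mass'': their initial and terminal expectations have the same limit.

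Next I would pass this to the limit process $\widehat Z$. Since $\widehat Z$ is a nonnegative optional strong supermartingale (by the construction in (\ref{wcon}) via Theorem 2.7 in \cite{CS15strong}), for every $[0,T]$-valued stopping time $\sigma$ we have $\widetilde Y^n_\sigma\widetilde W^n_\sigma\stackrel{\massP}{\to}\widehat Z_\sigma$. In particular $\widehat Z_0=\lim_n\widetilde Y^n_0\widetilde W^n_0=x+\langle\widehat Q,e_T\rangle$ and, taking $\sigma\equiv T$ together with Fatou's lemma,
$$
\massE^\massP[\widehat Z_T]\;\leq\;\liminf_{n\to\infty}\massE^\massP\big[\widetilde Y^n_T\widetilde W^n_T\big]\;=\;x+\langle\widehat Q,e_T\rangle\;=\;\widehat Z_0.
$$
But the supermartingale property gives $\widehat Z_0\geq\massE^\massP[\widehat Z_T]$ with the reverse inequality needed; in fact I would instead show $\massE^\massP[\widehat Z_T]\geq x+\langle\widehat Q,e_T\rangle$, which is exactly the content of Remark \ref{re25}(i) and the optimality equality (\ref{optequ}): the terminal value $\widehat Z_T=\widehat Y_T\widehat W_T=\frac{d\widehat Q^r}{d\massP}\big(x+(\widehat H\cdot S)_T+e_T\big)$ satisfies $\massE^\massP[\widehat Z_T]=\langle\widehat Q^r,x+(\widehat H\cdot S)_T+e_T\rangle=x+\langle\widehat Q,e_T\rangle$. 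Combining, $\massE^\massP[\widehat Z_T]=\widehat Z_0$. A nonnegative optional strong supermartingale with equal expectations at $0$ and $T$ has constant expectation $t\mapsto\massE^\massP[\widehat Z_t]$ on $[0,T]$ (apply the supermartingale inequality between $0$, $t$, $T$), and hence, by the standard argument, all supermartingale inequalities are equalities; thus $\widehat Z$ is a martingale (in the optional strong sense, and therefore an ordinary one since the conditional-expectation identities force no jumps of the "wrong" sign). Finally, under Assumption \ref{assfil} every martingale admits a continuous modification, which gives the last assertion.

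The main obstacle I anticipate is the interchange of limit and expectation at the terminal time: establishing $\massE^\massP[\widehat Z_T]=\lim_n\massE^\massP[\widetilde Y^n_T\widetilde W^n_T]$ rather than merely $\leq$. Since $\widetilde Y^n_T\widetilde W^n_T=\widetilde Y^n_T\big(x+(\widehat H\cdot S)_T+e_T\big)$ with a common, uniformly-bounded-below random variable $x+(\widehat H\cdot S)_T+e_T$ that is \emph{not} in $\massL^\infty$, one cannot simply invoke dominated convergence; the clean route is to bypass the limit entirely and compute $\massE^\massP[\widehat Z_T]$ directly from $\widehat Z_T=\widehat Y_T\widehat W_T$ using (\ref{optequ}) and the definition of the pairing $\langle\widehat Q^r,\cdot\rangle$ in Remark \ref{re25}(i), as indicated above. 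Care is also needed in justifying that $\widehat Z_0=x+\langle\widehat Q,e_T\rangle$, which follows because $\cF_0$ is trivial so $\widetilde Y^n_0\widetilde W^n_0$ is the deterministic number $x+\langle\widetilde\massQ^n,e_T\rangle$, and these converge by Proposition \ref{prodm}.
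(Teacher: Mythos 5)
Your proposal is correct and, after the detour through Fatou's lemma, lands on exactly the same argument as the paper: compute $\widehat Z_0=\lim_n\widetilde Y^n_0\widetilde W^n_0=x+\langle\widehat Q,e_T\rangle$ using (\ref{qet}), identify $\massE^\massP[\widehat Z_T]=\massE^\massP[\widehat Y_T\widehat W_T]=\langle\widehat Q^r,x+(\widehat H\cdot S)_T+e_T\rangle=x+\langle\widehat Q,e_T\rangle$ directly from the optimality equation (\ref{optequ}), and conclude that the optional strong supermartingale $\widehat Z$ with $\widehat Z_0=\massE^\massP[\widehat Z_T]$ is a martingale. You correctly diagnose that the Fatou/limit-interchange route is the wrong tool and that the clean path is to evaluate $\massE^\massP[\widehat Z_T]$ directly via (\ref{optequ}); that ``clean route'' is precisely the paper's proof, so the extra paragraphs are exploratory scaffolding rather than a genuinely different argument.
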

\begin{proof}
Note that 
$$
\widehat{W}_T=\widetilde{W}^n_T=x+(\widehat{H}\cdot S)_T+e_T,\ {\rm for\ all}\ n\in \mathbb{N}.
$$
We have, from (\ref{optequ}) and (\ref{qet}),
$$
\widehat{Z}_0=\lim_{n\rightarrow \infty} \widetilde{Y}^n_0\widetilde{W}^n_0 =x+\lim_{n\rightarrow \infty} \langle \widetilde{\massQ}^n, e_T\rangle=x+\langle \widehat{Q}, e_T\rangle =\massE^\massP\big[\widehat{Y}_T\widehat{W}_T\big]=\massE^\massP\big[\widehat{Z}_T\big],
$$
from which we conclude that the process $\widehat{Z}$ is a martingale, since we have already known it is an optional strong supermartingale during the construction.
\end{proof}
\begin{proposition}\label{stop}
There exists a sequence of stopping times $\{\tau_k\}_{k=1}^\infty$,  such that $$\widehat{W}_{t\wedge \tau_k}\geq \tfrac{1}{k},\ {\rm for\ all}\ t\in [0, T],$$ and $\massP(\tau_k=T)\nearrow 1$, as $k\rightarrow \infty$. 
\end{proposition}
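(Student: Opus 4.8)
The plan is to deduce the statement from the single fact that the làdlàg process $\widehat{W}=\widehat{Z}/\widehat{Y}$ is strictly positive and bounded away from $0$ on $[0,T]$, together with one structural property of $\widehat{W}$ that is special to the Brownian filtration (no downward jumps). Granting these, the $\tau_k$ are simply first passage times of $\widehat{W}$ below the level $\tfrac1k$.

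\emph{Step 1: $\inf_{0\le t\le T}\widehat{W}_t>0$ $\massP$-a.s.} By Proposition~\ref{zmart}, $\widehat{Z}=\widehat{Y}\widehat{W}$ is a nonnegative continuous martingale, and $\widehat{Z}_T=\widehat{Y}_T\widehat{W}_T>0$ $\massP$-a.s.\ since $\widehat{Y}_T=\tfrac{d\widehat{Q}^r}{d\massP}>0$ (Remark~\ref{re25}(ii)) and $\widehat{W}_T=x+(\widehat{H}\cdot S)_T+e_T>0$. A nonnegative continuous martingale, once it hits $0$, stays at $0$; hence $\widehat{Z}_T>0$ a.s.\ forces $\widehat{Z}_t>0$ for all $t\in[0,T]$ a.s., and by continuity on the compact interval $\inf_{0\le t\le T}\widehat{Z}_t>0$ a.s. On the other hand $\widehat{Y}$ is an optional strong supermartingale, hence làdlàg, hence bounded on $[0,T]$, so $\sup_{0\le t\le T}\widehat{Y}_t<\infty$ a.s. Therefore $\widehat{W}_t=\widehat{Z}_t/\widehat{Y}_t\ge\big(\inf_{s}\widehat{Z}_s\big)\big/\big(\sup_{s}\widehat{Y}_s\big)>0$ for every $t$, which proves the claim; in particular $\widehat{W}_0$, being $\cF_0$-measurable, is an a.s.\ strictly positive constant.

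\emph{Step 2: $\widehat{W}$ has no negative jumps, i.e.\ $\widehat{W}_{t-}\le\widehat{W}_t\le\widehat{W}_{t+}$ for all $t$.} This is where Assumption~\ref{assfil} enters. Since every local martingale for this filtration is continuous, the Mertens decomposition of the nonnegative optional strong supermartingale $\widehat{Y}$ reads $\widehat{Y}=M-A$ with $M$ a continuous local martingale and $A$ a predictable, làdlàg, nondecreasing process, $A_0=0$ (localising first in case $\widehat{Y}$ is not of class $(\mathrm D)$). Consequently $\widehat{Y}$ has only nonpositive jumps, $\widehat{Y}_{t-}\ge\widehat{Y}_t\ge\widehat{Y}_{t+}$. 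Because $\widehat{Z}$ is continuous and, by Step 1, strictly positive, while $\inf_{0\le t\le T}\widehat{Y}_t>0$, passing to $\widehat{W}=\widehat{Z}/\widehat{Y}$ reverses the inequalities and gives $\widehat{W}_{t-}\le\widehat{W}_t\le\widehat{W}_{t+}$.

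\emph{Step 3: construction and verification.} For every $k$ with $\tfrac1k<\widehat{W}_0$ (all but finitely many $k$, by Step 1) set
$$\tau_k:=\inf\big\{t\in[0,T]:\widehat{W}_t<\tfrac1k\big\}\wedge T,$$
which is a stopping time by the début theorem (the set is optional). Fix $t\in[0,T]$. For every $s<t\wedge\tau_k$ we have $s<\tau_k$, hence $\widehat{W}_s\ge\tfrac1k$ by definition of $\tau_k$; letting $s\uparrow t\wedge\tau_k$ gives $\widehat{W}_{(t\wedge\tau_k)-}\ge\tfrac1k$, and then Step 2 yields $\widehat{W}_{t\wedge\tau_k}\ge\widehat{W}_{(t\wedge\tau_k)-}\ge\tfrac1k$ (the case $t\wedge\tau_k=0$ being covered by $\widehat{W}_0>\tfrac1k$). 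The sequence $\tau_k$ is nondecreasing, hence so is $\{\tau_k=T\}$. Finally, on $\{\tau_k<T\}$ there is some $t\in[0,T]$ with $\widehat{W}_t<\tfrac1k$, so $\{\tau_k<T\}\subseteq\{\inf_{0\le t\le T}\widehat{W}_t<\tfrac1k\}$; these events decrease, as $k\to\infty$, to $\{\inf_{0\le t\le T}\widehat{W}_t\le0\}$, which is $\massP$-null by Step 1. Hence $\massP(\tau_k=T)\nearrow1$.

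The main obstacle is Step 2: it is the only place the Brownian assumption is genuinely used, and some care is needed with the Mertens decomposition of $\widehat{Y}$ — its availability for a possibly non-class-$(\mathrm D)$ nonnegative optional strong supermartingale (via localisation) and the measurability and monotonicity of the compensator — in order to rule out downward jumps of $\widehat{W}$. Steps 1 and 3 are routine once Step 2 is in hand.
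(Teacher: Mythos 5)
Your proof is correct in outcome but takes a genuinely different route from the paper's. The paper uses the Brownian assumption via the fact that \emph{every stopping time is predictable}: it defines $\sigma_k=\inf\{t>0:\widehat W_t<\tfrac1k\}$, announces each $\sigma_k$ by a sequence $\sigma_{k,m}<\sigma_k$, and sets $\tau_k=\sigma_{k,m_k}\wedge T$ for a suitably large $m_k$; since $\tau_k$ lies strictly before $\sigma_k$, the stopped process cannot have dipped below $\tfrac1k$, and no control on the jumps of $\widehat W$ is needed. You instead invoke the Brownian assumption via \emph{continuity of local martingales}, feed this into a Mertens decomposition $\widehat Y=M-A$ to show that $\widehat Y$ (hence $\widehat W^{-1}$, hence $\widehat W$ after inversion by the continuous, positive $\widehat Z$) has no downward jumps, and then take $\tau_k$ to be the first-passage time itself. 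Your route is conceptually appealing — it avoids the announcing sequence and makes the monotonicity of $\{\tau_k=T\}$ immediate — but it is also heavier: you need the Mertens decomposition with a local-martingale compensator for a nonnegative optional strong supermartingale that may not be of class $(\mathrm{D})$, and you correctly flag that the localisation there is not completely routine. An alternative, lighter justification of your Step~2 that stays closer to the paper's toolkit: since the Brownian filtration is quasi-left-continuous and every stopping time $\tau$ is predictable with $\cF_{\tau-}=\cF_\tau$, announcing $\tau$ by $\tau_n\uparrow\tau$ and using the optional strong supermartingale inequality $\widehat Y_{\tau_n}\ge\massE[\widehat Y_\tau|\cF_{\tau_n}]$ gives, in the limit, $\widehat Y_{\tau-}\ge\massE[\widehat Y_\tau|\cF_{\tau-}]=\widehat Y_\tau$, which is exactly the one-sided jump bound you need without invoking the Mertens decomposition. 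With that substitution your argument is fully rigorous; as written, it is sound modulo the class-$(\mathrm{D})$ caveat you yourself identified. Two trivial points shared by both proofs: the inequality $\widehat W_{t\wedge\tau_k}\ge\tfrac1k$ only makes sense once $\tfrac1k<\widehat W_0$ (all but finitely many $k$, since $\widehat W_0=x+\langle\widehat Q,e_T\rangle>0$ is a constant), and the paper's choice of $\tau_k$ is not literally nondecreasing in $k$ whereas yours is, so your argument for $\massP(\tau_k=T)\nearrow 1$ is a bit cleaner.
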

\begin{proof}
Although $\widehat{Y}$ is only an optional strong supermartingale, we can always apply the martingale inequality to show that
$\sup_{0\leq t\leq T}\widehat{Y}_t<\infty,\ \massP$-a.s. (see P395, Appendix I-3 in \cite{DM82}). On the other hand, thanks to proposition \ref{zmart}, we could proceed with the same argument as (\ref{wmin}) to obtain $\inf_{0\leq t\leq T}\widehat{Z}_t>0,\ \massP$-a.s. Clearly, we now have 
$$
\inf_{0\leq t\leq T}\widehat{W}_t>0,\ \massP-a.s.
$$
Without loss of generality, we assume $\widehat{Z}_t=\widehat{Z}_{T}$, for $t\geq T$. Define 
 \begin{equation}\label{stopping}
   \sigma_k:=\inf\left\{t>0: \widehat{W}_t<\tfrac{1}{k}\right\},\ {\rm for}\ k\in \mathbb{N}, 
 \end{equation}
which goes to infinity. From Assumption \ref{assfil}, all the stopping times defined above are predictable. Therefore, for each $k$, we can choose a sequence $\sigma_{k, m}\rightarrow \sigma_k$ and $\sigma_{k, m}<\sigma_k$, whenever $\sigma_k>0$. Define $\tau_k:=\sigma_{k, m_k}\wedge T$, where
 $$\massP\left(|\sigma_{k, m_k}-\sigma_{k}|>\tfrac{1}{2^k}\right)<\tfrac{1}{2^k}. $$
The sequence $\{\tau_k\}_{k=1}^\infty$ yields the desired result.
\end{proof}

\vspace{2mm}

\subsection{The fictional process for the random endowment} In the sequel, fix $k\in \mathbb{N}$ and denote by $\tau=\tau_k$. We shall first decompose $\widehat{W}$ and obtain a fictional process for the random endowment $e_T$. Then, we construct a dual optimizer $Q^\star$ and prove that the random variable $e_\tau$ is the conditional expectation of $e_T$ under $Q^\star$. 

\vspace{2mm}

It follows from (\ref{pro}) and (\ref{wcon}) that for every $[0,T]$-valued stopping time $\tau$,
\begin{equation}\label{wcon2}
\widetilde{W}^n_\tau\stackrel{\massP}{\longrightarrow}  \widehat{W}_\tau,\ {\rm as}\ n\rightarrow \infty.
\end{equation}
Then, we rewrite the process $\widehat{W}$ as
$$
\widehat{W}_t=x+(\widehat{H}\cdot S)_t+e_t,\ 0\leq t\leq T,
$$
where
\begin{equation}\label{proe}
e_t:=\massP-\lim_{n\rightarrow \infty}\ \massE^{\widetilde{\massQ}^n}[e_T|\mathcal{F}_t]
\end{equation}
with
$$
\massE^{\widetilde{\massQ}^n}[e_T|\mathcal{F}_t]= \frac{\massE^{\massP}\big[\widetilde{Y}^n_T e_T|\mathcal{F}_t\big]}{\massE^{\massP}\big[\widetilde{Y}^n_T|\mathcal{F}_t\big]}.
$$
\begin{remark}
In \cite{CSW01}, $e_T$ is indeed associated with a cumulative process $e:=(e_t)_{0\leq t\leq T}$ with $e_0=0$, however, only the terminal value $e_T$ influences the choice of the agent. In our paper, $e:=(e_t)_{0\leq t\leq T}$ is a fictional value process with the terminal value $e_T$, which is constructed by (\ref{proe}) and should be differed from the one in \cite{CSW01}.
\end{remark}

With the stopping time $\tau$, we see that $\big\{\massE^{\massP}\big[\widetilde{Y}^n_T e_T|\mathcal{F}_\tau\big]\big\}_{n=1}^\infty$ and $\big\{\massE^{\massP}\big[\widetilde{Y}^n_T|\mathcal{F}_\tau\big]\big\}_{n=1}^\infty$ are $\massL^1$-bounded, then recalling Koml\'os' lemma, we can find a sequence $\{\overline{Y}^n\}_{n=1}^\infty$ of convex combinations $\overline{Y}^n\in\conv(\widetilde{Y}^n, \widetilde{Y}^{n+1}, \ldots)$ associated with $\overline{\massQ}^n\in\conv(\widetilde{\massQ}^n, \widetilde{\massQ}^{n+1}, \ldots)$, such that $\massP$-a.s., for some $g\in \massL^1(\Omega, \mathcal{F}_\tau, \massP)$,
\begin{align}\label{con2}
   \lim_{n\rightarrow \infty} \massE^{\massP}\big[\overline{Y}^n_T e_T|\mathcal{F}_\tau\big]&=g, \quad
    \lim_{n\rightarrow \infty}\massE^{\massP}\big[\overline{Y}^n_T|\mathcal{F}_\tau\big]=\lim_{n\rightarrow \infty}\overline{Y}^n_\tau=\widehat{Y}_\tau,  \\
     \mbox{ and }  \quad  e_\tau&=\lim_{n\rightarrow \infty}\massE^{\overline{\massQ}^n}[e_T|\mathcal{F}_\tau]=\frac{g}{\widehat{Y}_\tau}. \nonumber
\end{align}

\begin{remark} The random variables $\{\massE^{\overline{\massQ}^n}[e_T|\mathcal{F}_\tau]\infseq$ and $e_\tau$ are elements in $\massL^\infty(\Omega, \mathcal{F}_\tau, \massP)$, and 
\begin{equation*}
\massE^{\overline{\massQ}^n}[e_T|\mathcal{F}_\tau]\stackrel{w^*}{\longrightarrow}e_\tau,\ {\rm as}\ n\rightarrow \infty.
\end{equation*}
Indeed, for each step function $\xi^m\in \mathcal{F}_\tau$, $m\in \mathbb{N}$, it follows from the bounded convergence theorem that $\langle \massE^{\overline{\massQ}^n}[e_T|\mathcal{F}_\tau], \xi^m\rangle\rightarrow \langle e_\tau, \xi^m\rangle$. If $\xi^m\rightarrow \xi$ in $\massL^1(\Omega, \mathcal{F}_\tau, \massP)$, then from the uniform integrability of 
 $$ \big\{\massE^{\overline{\massQ}^n}[e_T|\mathcal{F}_\tau]\xi^m\big\}_{m, n=1}^\infty, $$
we have $\big\langle \massE^{\overline{\massQ}^n}[e_T|\mathcal{F}_\tau], \xi\big\rangle\rightarrow \langle e_\tau, \xi\rangle$.
\end{remark}
By Egorov's theorem, there exists an increasing sequence of sets
$\{\Omega_m\}^{\infty}_{m=1}$, such that for each $m$, $\massP{(\Omega_m)}>1-\frac{1}{2^m}$, and $\big\{\overline{Y}^n_\tau\big\}_{n=1}^\infty$ uniformly converges to $\widehat{Y}_\tau$ on $\Omega_m$.  Observing that for each $n\in \mathbb{N}$, $\overline{Y}^n_\tau$ is in $\massL^1_+(\Omega, \mathcal{F}_\tau, \massP)$, we know from Fatou's lemma that $\massE^\massP\big[|\widehat{Y}_\tau|\big]\leq 1$, and thus
\begin{equation}\label{ui}
  \big\{\overline{Y}^n_\tau\big\}_{n=1}^\infty\ {\rm is\ uniformly\ integrable\ on}\ \Omega_m.\end{equation} 
\begin{proposition}
The sequence $\big\{\overline{\massQ}^n\big\}_{n=1}^\infty\subset \mathcal{M}$ associated with $\big\{\overline{Y}^n_\tau\big\}_{n=1}^\infty$ admits a cluster point $Q^\star\in \cD$, such that 
\begin{enumerate}[$(i)$]
 \item $\overline{Y}^n_T\longrightarrow \frac{d{Q^\star}^r}{d\massP},\ \massP$-a.s.; 
 \item $\langle \widehat{Q}, e_T\rangle =\langle Q^\star, e_T\rangle$; 
 \item $Q^\star$ is a dual optimizer for the problem (\ref{dp}).
\end{enumerate} 
\end{proposition}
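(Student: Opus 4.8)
The plan is to extract the cluster point $Q^\star$ using the weak-$*$ compactness of $\cD$, and then to verify the three listed properties by tracking how the Komlós-type convex combinations $\overline{Y}^n$ pass to the limit, both $\massP$-a.s.\ at the terminal time and in the weak-$*$ sense that is relevant for the singular parts. First I would note that $\{\overline{\massQ}^n\}_{n=1}^\infty\subset\cM\subset\cD$, and since $\cD$ is convex and weak-$*$ compact (as recalled after \eqref{dualdom}, citing \cite{CSW01}), the sequence, viewed via the canonical embedding in $\linftys$, admits a cluster point $Q^\star\in\cD$ in the weak-$*$ topology $\sigma(\linftys,\massL^\infty)$. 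Passing to the relevant subsequence, I may assume $\overline{\massQ}^n\xrightarrow{w^*}Q^\star$.

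For $(i)$, the point is that the $\massP$-a.s.\ limit of the densities $\overline{Y}^n_T=\tfrac{d\overline{\massQ}^n}{d\massP}$ must coincide with the regular part $\tfrac{d{Q^\star}^r}{d\massP}$ of the cluster point. Since $\overline{Y}^n\in\conv(\widetilde{Y}^n,\widetilde{Y}^{n+1},\ldots)$ and $\widetilde{Y}^n_T\to\tfrac{d\widehat{Q}^r}{d\massP}$ $\massP$-a.s.\ by the construction in \eqref{pro}, convex combinations of a.s.-convergent sequences converge a.s.\ to the same limit, so $\overline{Y}^n_T\to\tfrac{d\widehat{Q}^r}{d\massP}$ $\massP$-a.s. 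It then remains to identify $\tfrac{d{Q^\star}^r}{d\massP}$ with $\tfrac{d\widehat{Q}^r}{d\massP}$: by Fatou/Yosida--Hewitt, for any nonnegative $h\in\massL^\infty$ one has $\langle Q^\star,h\rangle=\lim_n\langle\overline{\massQ}^n,h\rangle\ge\liminf_n\massE^\massP[\overline{Y}^n_T h]\ge\massE^\massP[\tfrac{d\widehat{Q}^r}{d\massP}h]$, so the countably additive part of $Q^\star$ dominates $\widehat{Q}^r$ as a measure; since $Q^\star\in\cD$ has total mass $1$ and $\widehat{Q}^r$ already has the right total mass up to the singular part, the regular parts coincide. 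This gives $(i)$ with limit $\tfrac{d{Q^\star}^r}{d\massP}=\tfrac{d\widehat{Q}^r}{d\massP}$, matching \eqref{qet}.

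For $(ii)$, I would use that $\{\overline{\massQ}^n\}$ is a sequence of convex combinations of $\{\widetilde{\massQ}^n\}$, and that $\langle\widetilde{\massQ}^n,e_T\rangle\to\langle\widehat{Q},e_T\rangle$ by \eqref{qet} (the pairing with $e_T$ is linear, so convex combinations preserve the limit); hence $\langle\overline{\massQ}^n,e_T\rangle\to\langle\widehat{Q},e_T\rangle$. On the other hand, $e_T\in\massL^\infty$, so by weak-$*$ convergence $\langle\overline{\massQ}^n,e_T\rangle\to\langle Q^\star,e_T\rangle$. Equating the two limits yields $\langle\widehat{Q},e_T\rangle=\langle Q^\star,e_T\rangle$. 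Finally $(iii)$ is then immediate: combining $(i)$ and $(ii)$ with the formula \eqref{dp} for the dual functional, $\massE^\massP[V(\hat y\,\tfrac{d{Q^\star}^r}{d\massP})]+\hat y\langle Q^\star,e_T\rangle=\massE^\massP[V(\hat y\,\tfrac{d\widehat{Q}^r}{d\massP})]+\hat y\langle\widehat{Q},e_T\rangle=v(\hat y)$, so $Q^\star$ attains the infimum in \eqref{dp}; alternatively one may simply invoke Proposition A.1 in \cite{CSW01}, which asserts that any cluster point of a sequence whose regular densities converge a.s.\ and whose pairings with $e_T$ converge is again a dual optimizer.

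The main obstacle I anticipate is the identification step in $(i)$ — making precise that the $\massP$-a.s.\ limit of the densities really is the \emph{regular} part of the weak-$*$ cluster point, rather than something smaller, i.e.\ ruling out that mass escapes into the singular part in a way that would lower the regular density. This is exactly where the total-mass bookkeeping (the normalization $\|Q^\star\|_{\linftys}=1$ together with the Yosida--Hewitt split and Fatou's lemma) has to be used carefully; it is the one place where a purely ``soft'' limiting argument is not quite enough and one must exploit that $\widehat{Q}^r\sim\massP$ with $\massE^\massP[\tfrac{d\widehat{Q}^r}{d\massP}]$ equal to one minus the total singular mass. Everything else — the a.s.\ convergence of convex combinations, the linearity of the $e_T$-pairing, and the weak-$*$ continuity of pairing against the fixed bounded function $e_T$ — is routine.
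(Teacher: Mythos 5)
Your overall scaffolding matches the paper's: extract $Q^\star$ by weak-$*$ compactness of $\cD$, get $(ii)$ from the stability of the pairing against $e_T$ under convex combinations together with weak-$*$ convergence, and then obtain $(iii)$ from $(i)$ and $(ii)$. Parts $(ii)$ and $(iii)$ are fine as written. The gap — which you yourself flag — is precisely in the identification step of $(i)$, and the ``total-mass bookkeeping'' you sketch does \emph{not} close it.

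Concretely, the Fatou argument delivers only the one-sided inequality
$\tfrac{d{Q^\star}^r}{d\massP}\geq\tfrac{d\widehat{Q}^r}{d\massP}$ $\massP$-a.s.: you restrict nonnegative test functions $h\in\massL^\infty$ to sets on which the purely finitely additive part $(Q^\star)^s$ vanishes, getting $\massE^\massP\bigl[\tfrac{d{Q^\star}^r}{d\massP}h\bigr]\geq\massE^\massP\bigl[\tfrac{d\widehat{Q}^r}{d\massP}h\bigr]$ there, and let those sets fill up $\Omega$. That much is correct. But normalization does not finish the job: $\|Q^\star\|_{\linftys}=1$ only tells you $\massE^\massP\bigl[\tfrac{d{Q^\star}^r}{d\massP}\bigr]=1-\|(Q^\star)^s\|$, and you have no a priori control relating $\|(Q^\star)^s\|$ to $\|\widehat{Q}^s\|$; the strict domination $\tfrac{d{Q^\star}^r}{d\massP}>\tfrac{d\widehat{Q}^r}{d\massP}$ on a set of positive measure is perfectly compatible with $\|Q^\star\|=\|\widehat{Q}\|=1$, as it would simply force $\|(Q^\star)^s\|<\|\widehat{Q}^s\|$. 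So Fatou plus the norm constraint is genuinely insufficient for the reverse inequality; this is not a presentation issue but a missing step.

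There are two clean ways to repair it. The route the paper takes is to invoke Proposition A.1 of \cite{CSW01} directly for $(i)$: the standard proof of that proposition supplements Fatou with an Egorov argument — on sets $B_m$ with $\massP(B_m)\uparrow 1$ the convergence $\overline{Y}^n_T\to\tfrac{d\widehat{Q}^r}{d\massP}$ is uniform, hence the family $\{\overline{Y}^n_T\mathbf{1}_{B_m}\}_n$ is uniformly integrable, and pairing with $h$ supported on $B_m\cap A_\varepsilon$ yields the \emph{equality} $\massE^\massP\bigl[\tfrac{d{Q^\star}^r}{d\massP}h\bigr]=\massE^\massP\bigl[\tfrac{d\widehat{Q}^r}{d\massP}h\bigr]$, which passes to all of $\Omega$. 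An alternative — arguably more in the spirit of your write-up — is to \emph{reorder} $(i)$ and $(iii)$: from the one-sided Fatou bound and the fact that $V$ is decreasing, one has $\massE^\massP\bigl[V\bigl(\hat y\tfrac{d{Q^\star}^r}{d\massP}\bigr)\bigr]\leq\massE^\massP\bigl[V\bigl(\hat y\tfrac{d\widehat{Q}^r}{d\massP}\bigr)\bigr]$, and combined with $(ii)$ this shows $Q^\star$ already attains $v(\hat y)$, giving $(iii)$; then $(i)$ follows from the uniqueness, up to the singular part, of the dual optimizer in Theorem \ref{thmcsw}. Either repair keeps your proof intact; as written, however, $(i)$ is not established and hence the chain $(i)\Rightarrow(iii)$ has a hole.
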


\begin{proof}
Note $\{\overline{\massQ}^n\infseq\subset \mathcal{M}\subset\cD$. Since $\cD$ is a weak-star compact subset of $\linftys$, the sequence $\{\overline{\massQ}^n\infseq$ admits a cluster point $Q^\star\in\cD$. (i) follows from Proposition A.1 in \cite{CSW01}; (ii) holds because of (\ref{qet}); (i) and (ii) imply (iii). 
\end{proof}

Immediately, we have the following corollary:

\begin{corollary}\label{sing}
The finitely additive measure $Q^\star|_{\mathcal{F}_\tau}$ is countably additive on $\Omega_m$, for each $m\in \mathbb{N}$, where $ Q^\star|_{\mathcal{F}_\tau}$ denotes the restriction of $ Q^\star$ on $\mathcal{F}_\tau$. In other words, $(Q^\star|_{\mathcal{F}_\tau})^s$ vanishes on $\Omega_m$, i.e., for each $A\subset \Omega_m$, $A\in \mathcal{F}_\tau$, $\langle(Q^\star|_{\mathcal{F}_\tau})^s, {\bf 1}_{A}\rangle=0$.
\end{corollary}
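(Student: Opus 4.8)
The claim is that the purely finitely additive part of $Q^\star|_{\mathcal{F}_\tau}$ carries no mass on any $\mathcal{F}_\tau$-measurable subset of $\Omega_m$. The starting point is the preceding proposition: the convex combinations $\overline{Y}^n$ satisfy $\overline{Y}^n_T \to \frac{d(Q^\star)^r}{d\massP}$ $\massP$-a.s., and by construction $\overline{Y}^n_\tau \to \widehat{Y}_\tau$ $\massP$-a.s. with $\{\overline{Y}^n_\tau\}$ uniformly integrable on $\Omega_m$ by \eqref{ui}. The plan is to exploit the martingale property of $\overline{Y}^n$ together with this uniform integrability to show that, when restricted to $\mathcal{F}_\tau$ and to the set $\Omega_m$, the weak-star limit of the densities $\overline{Y}^n_T$ (which represents $Q^\star|_{\mathcal{F}_\tau}$ as an element of $(\massL^\infty(\Omega,\mathcal{F}_\tau,\massP))^*$) actually coincides with its $\massL^1$-limit $\widehat{Y}_\tau$, so no mass escapes to the singular part.

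\emph{Key steps, in order.} First, identify $Q^\star|_{\mathcal{F}_\tau}$ concretely: for $A \in \mathcal{F}_\tau$ we have $\langle Q^\star, \mathbf{1}_A\rangle = \lim_j \massE^{\massP}[\overline{Y}^{n_j}_T \mathbf{1}_A]$ along the subnet realizing the cluster point, and since $A \in \mathcal{F}_\tau$, the martingale property of $\overline{Y}^n$ gives $\massE^{\massP}[\overline{Y}^{n_j}_T \mathbf{1}_A] = \massE^{\massP}[\overline{Y}^{n_j}_\tau \mathbf{1}_A]$. Second, fix $m$ and restrict attention to $A \subset \Omega_m$: by \eqref{ui} the sequence $\{\overline{Y}^n_\tau \mathbf{1}_{\Omega_m}\}$ is uniformly integrable, and it converges $\massP$-a.s. to $\widehat{Y}_\tau \mathbf{1}_{\Omega_m}$ by \eqref{con2}, so by Vitali's convergence theorem $\overline{Y}^n_\tau \mathbf{1}_{\Omega_m} \to \widehat{Y}_\tau \mathbf{1}_{\Omega_m}$ in $\massL^1(\massP)$. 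Hence $\massE^{\massP}[\overline{Y}^{n_j}_\tau \mathbf{1}_A] \to \massE^{\massP}[\widehat{Y}_\tau \mathbf{1}_A]$ for every $A \subset \Omega_m$, $A \in \mathcal{F}_\tau$. Third, combine: $\langle Q^\star, \mathbf{1}_A\rangle = \massE^{\massP}[\widehat{Y}_\tau \mathbf{1}_A] = \langle \widehat{Y}_\tau, \mathbf{1}_A\rangle$ for all such $A$, i.e.\ $Q^\star|_{\mathcal{F}_\tau}$ agrees on $\Omega_m$ with the absolutely continuous measure having density $\widehat{Y}_\tau \in \massL^1$; in particular it is countably additive there. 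Fourth, deduce the statement about the Yosida--Hewitt decomposition: since $(Q^\star|_{\mathcal{F}_\tau})^r$ has density $\widehat{Y}_\tau = \frac{d(Q^\star)^r}{d\massP}|_{\mathcal{F}_\tau}$ by \eqref{con2} (the $\mathcal{F}_\tau$-conditional expectation of the regular part equals $\widehat{Y}_\tau$), the purely finitely additive part $(Q^\star|_{\mathcal{F}_\tau})^s$ must annihilate $\mathbf{1}_A$ for every $A \subset \Omega_m$.

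\emph{Main obstacle.} The delicate point is the passage to the limit along the \emph{cluster point} rather than along a fixed subsequence: a priori $Q^\star$ is only a $\sigma((\massL^\infty)^*,\massL^\infty)$-cluster point of $\{\overline{\massQ}^n\}$, so the identity $\langle Q^\star,\mathbf{1}_A\rangle = \lim_n \massE^{\massP}[\overline{Y}^n_T\mathbf{1}_A]$ need not hold along the full sequence but only along some subnet. One must check that the $\massL^1$-convergence of $\overline{Y}^n_\tau\mathbf{1}_{\Omega_m}$ — which holds for the full sequence — is enough to pin down the value $\langle Q^\star, \mathbf{1}_A\rangle$ uniquely, irrespective of the chosen subnet; this is where testing against the bounded function $\mathbf{1}_A\in\massL^\infty(\Omega,\mathcal{F}_\tau,\massP)$, combined with the fact that every cluster point of the full sequence $\{\massE^{\massP}[\overline{Y}^n_T\mathbf{1}_A]\}_n \subset \RR$ equals the common limit $\massE^{\massP}[\widehat{Y}_\tau\mathbf{1}_A]$, closes the gap. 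A secondary technical care is ensuring that $\widehat{Y}_\tau$ genuinely represents the $\mathcal{F}_\tau$-restriction of the regular part $\frac{d(Q^\star)^r}{d\massP}$, which is exactly the content of the second line of \eqref{con2}, so this should be quotable rather than re-proved.
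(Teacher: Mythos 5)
Your proposal is correct and follows the same approach the paper intends (the paper leaves Corollary \ref{sing} without a written proof, labeling it ``immediate'' after establishing (\ref{ui}) and the cluster-point proposition). The key chain of ideas — use the martingale property to replace $\overline{Y}^n_T$ by $\overline{Y}^n_\tau$ when testing against $\mathcal{F}_\tau$-measurable indicators, pass from a.s.\ convergence plus the Egorov/uniform-integrability bound (\ref{ui}) to $\massL^1$-convergence on $\Omega_m$, and observe that any weak-star cluster point of $\{\overline{\massQ}^n|_{\mathcal{F}_\tau}\}$ must agree with the common limit $\massE^\massP[\widehat{Y}_\tau \mathbf{1}_A]$ — is exactly right, and you correctly isolate the subnet-versus-subsequence subtlety as the point requiring care. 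One small imprecision: in your fourth step you attribute to (\ref{con2}) the identification $\widehat{Y}_\tau = \frac{d(Q^\star|_{\mathcal{F}_\tau})^r}{d\massP}$, but (\ref{con2}) gives only the pointwise a.s.\ limit of $\overline{Y}^n_\tau$, whereas the identification with the regular part's density is the content of Proposition \ref{lower}, which the paper proves \emph{after} this corollary (via the argument from \cite{PR14} and Proposition A.1 of \cite{CSW01}). This is harmless, because your step 3 already shows $Q^\star|_{\mathcal{F}_\tau}$ agrees on $\Omega_m$ with the countably additive measure having density $\widehat{Y}_\tau \in \massL^1$, and since the restriction of a purely finitely additive measure to a measurable set stays purely finitely additive, the uniqueness of the Yosida--Hewitt decomposition forces $(Q^\star|_{\mathcal{F}_\tau})^s$ to vanish on $\Omega_m$ without any reference to the forthcoming Proposition~\ref{lower}.
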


Proceeding as in the proof of Corollary 5.2 in \cite{PR14}, we observe that $Q^\star|_{\mathcal{F}_\tau}$ is also a cluster point of the sequence $\{\overline{\massQ}^n|_{\mathcal{F}_\tau}\infseq$, and it follows again from Proposition A.1.~in \cite{CSW01} that

\begin{proposition}\label{lower}
$$\widehat{Y}_\tau=\lim_{n\rightarrow \infty}\massE^{\massP}\big[\overline{Y}^n_T|\mathcal{F}_\tau\big]=\frac{d(Q^\star|_{\mathcal{F}_\tau})^r}{d\massP}, \massP-a.s.$$
\end{proposition}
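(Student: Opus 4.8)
The plan is to identify the density $\tfrac{d(Q^\star|_{\cF_\tau})^r}{d\massP}$ by testing the finitely additive measure $Q^\star|_{\cF_\tau}$ against indicators of $\cF_\tau$-measurable subsets of the exhausting sets $\Omega_m$ produced by Egorov's theorem, on which both a weak-star convergence and an $\massL^1$-convergence of the densities $\overline{Y}^n_\tau$ are simultaneously available. First I would record that each $\overline{\massQ}^n\in\mathcal{M}$ has a uniformly integrable $\massP$-martingale density process, so optional sampling at the bounded stopping time $\tau$ yields $\overline{Y}^n_\tau=\massE^\massP[\overline{Y}^n_T\mid\cF_\tau]$; equivalently, the restriction $\overline{\massQ}^n|_{\cF_\tau}$ is countably additive on $\cF_\tau$ with $\massP|_{\cF_\tau}$-density $\overline{Y}^n_\tau$. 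I would also recall that $Q^\star|_{\cF_\tau}$ is a weak-star cluster point of $\{\overline{\massQ}^n|_{\cF_\tau}\}_{n=1}^\infty$, this being the point established by arguing as for Corollary~5.2 in \cite{PR14}, using weak-star continuity of the restriction map $(\massL^\infty)^*\to(\massL^\infty(\cF_\tau))^*$.

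Then I would fix $m\in\mathbb{N}$ and $A\in\cF_\tau$ with $A\subset\Omega_m$, and evaluate $\langle Q^\star,\mathbf{1}_A\rangle$ in two ways. On the one hand, by (\ref{ui}) the family $\{\overline{Y}^n_\tau\mathbf{1}_{\Omega_m}\}_{n}$ is uniformly integrable and, by (\ref{con2}), converges $\massP$-a.s.\ to $\widehat{Y}_\tau\mathbf{1}_{\Omega_m}$, hence in $\massL^1$; therefore $\overline{\massQ}^n(A)=\massE^\massP[\overline{Y}^n_\tau\mathbf{1}_A]\to\massE^\massP[\widehat{Y}_\tau\mathbf{1}_A]$ as $n\to\infty$. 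On the other hand, since $\mathbf{1}_A\in\massL^\infty(\cF_\tau)$ and $Q^\star|_{\cF_\tau}$ is a cluster point of $\{\overline{\massQ}^n|_{\cF_\tau}\}$, the scalar $\langle Q^\star,\mathbf{1}_A\rangle=\langle Q^\star|_{\cF_\tau},\mathbf{1}_A\rangle$ is a cluster point of the convergent real sequence $(\overline{\massQ}^n(A))_n$, hence equals its limit $\massE^\massP[\widehat{Y}_\tau\mathbf{1}_A]$. Invoking Corollary~\ref{sing}, which asserts that $(Q^\star|_{\cF_\tau})^s$ vanishes on $\Omega_m$, I would obtain $\langle Q^\star,\mathbf{1}_A\rangle=\massE^\massP[\tfrac{d(Q^\star|_{\cF_\tau})^r}{d\massP}\mathbf{1}_A]$; comparing with the previous identity and letting $A$ range over all $\cF_\tau$-subsets of $\Omega_m$ gives $\tfrac{d(Q^\star|_{\cF_\tau})^r}{d\massP}=\widehat{Y}_\tau$ $\massP$-a.s.\ on $\Omega_m$.

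To conclude, since $\massP(\Omega_m)>1-2^{-m}\nearrow1$ and the $\Omega_m$ are increasing, they exhaust $\Omega$ up to a $\massP$-null set, so the last identity holds $\massP$-a.s.; together with $\widehat{Y}_\tau=\lim_n\overline{Y}^n_\tau=\lim_n\massE^\massP[\overline{Y}^n_T\mid\cF_\tau]$ from (\ref{con2}), this is exactly the assertion. A quicker route would be to quote Proposition~A.1 of \cite{CSW01} directly for the sequence $\{\overline{\massQ}^n|_{\cF_\tau}\}$: it identifies the regular part of a weak-star cluster point in $\cD$ with the $\massP$-a.s.\ limit of the densities of a suitable sequence of convex combinations, which here is $\widehat{Y}_\tau$ by (\ref{con2}).

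The step I expect to be the main obstacle is the compatibility of the two convergences used in the second paragraph: the cluster point $Q^\star$ a priori arises only along a subnet of a sequence living in the non-metrizable weak-star compact set $\cD$, whereas the uniform-integrability input concerns the full sequence. This is reconciled by the observation that, for each fixed $A$, the scalar sequence $\overline{\massQ}^n(A)$ genuinely converges, so every subnet --- in particular the one realizing $Q^\star$ --- has the same limit; one must also be careful that the restriction $Q^\star|_{\cF_\tau}$ is itself obtained as a cluster point of the restricted sequence $\{\overline{\massQ}^n|_{\cF_\tau}\}$, which is precisely what the \cite{PR14}-type argument provides, rather than merely as the restriction of a cluster point computed in $(\massL^\infty)^*$.
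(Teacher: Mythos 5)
Your proof is correct and, via the ``quicker route'' you sketch at the end, is essentially the paper's own argument: the paper likewise first observes (by the \cite{PR14}-type argument that restriction is weak-star continuous) that $Q^\star|_{\cF_\tau}$ is a cluster point of $\{\overline{\massQ}^n|_{\cF_\tau}\}$, and then simply cites Proposition~A.1 of \cite{CSW01} to identify its regular part with the a.s.\ limit of the restricted densities. Your main, more detailed argument---testing against $\cF_\tau$-measurable subsets of the Egorov sets $\Omega_m$, combining the uniform integrability from (\ref{ui}) with the a.s.\ convergence from (\ref{con2}) to pin down the limit, and invoking Corollary~\ref{sing} to discard the singular part---is a correct self-contained unpacking of what that citation delivers, and your closing remark that a cluster point of a \emph{convergent} scalar sequence must equal its limit is precisely the right way to reconcile the subnet subtlety.
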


\begin{remark}
We remark that the choice of the finitely additive measure $Q^\star$ depends on the stopping time $\tau$, which may not be a F\"ollmer finitely additive measure for $\widehat{Y}$ (see Definition 2.6 in \cite{PR14}).
\end{remark}

The following corollary is straightforward from (\ref{lby}): 

\begin{corollary}\label{equiv}
$$(Q^\star|_{\mathcal{F}_\tau})^r\sim \massP.$$
\end{corollary}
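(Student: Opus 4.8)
The plan is to read the statement straight off the density identification in Proposition~\ref{lower} together with the strict positivity recorded in (\ref{lby}); no new estimate is required, which is why the paper can call it ``straightforward''. First I would note that, since $Q^\star\in(\massL^\infty)^*_+$ (because $Q^\star\in\cD$), its restriction $Q^\star|_{\mathcal{F}_\tau}$ is a non-negative finitely additive measure on $\mathcal{F}_\tau$, so in its Yosida--Hewitt decomposition the regular part $(Q^\star|_{\mathcal{F}_\tau})^r$ is a non-negative, countably additive, finite measure which by construction satisfies $(Q^\star|_{\mathcal{F}_\tau})^r\ll\massP|_{\mathcal{F}_\tau}$. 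Hence only the reverse absolute continuity has to be verified.

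By Proposition~\ref{lower} one has $\frac{d(Q^\star|_{\mathcal{F}_\tau})^r}{d\massP}=\widehat{Y}_\tau$, $\massP$-a.s. Since $\tau=\tau_k$ is a $[0,T]$-valued stopping time, (\ref{lby}) yields $\widehat{Y}_\tau\geq\inf_{0\leq t\leq T}\widehat{Y}_t>0$, $\massP$-a.s. Consequently the density of $(Q^\star|_{\mathcal{F}_\tau})^r$ with respect to $\massP$ is strictly positive $\massP$-a.s., so whenever $A\in\mathcal{F}_\tau$ satisfies $\langle(Q^\star|_{\mathcal{F}_\tau})^r,\mathbf{1}_A\rangle=\int_A\widehat{Y}_\tau\,d\massP=0$ we must have $\massP(A)=0$; combined with the trivial implication in the other direction this gives $(Q^\star|_{\mathcal{F}_\tau})^r\sim\massP$, as claimed.

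I do not expect a genuine obstacle here: the whole content sits in the two facts already established, namely the strict positivity (\ref{lby}) of the optional strong supermartingale $\widehat{Y}$ (obtained exactly as (\ref{wmin}) was, via Theorem~VI-17 and Appendix~I of~\cite{DM82}) and the identification in Proposition~\ref{lower} of $\frac{d(Q^\star|_{\mathcal{F}_\tau})^r}{d\massP}$ as $\widehat{Y}_\tau$ (which rests on Proposition~A.1 of~\cite{CSW01} applied to the cluster point $Q^\star|_{\mathcal{F}_\tau}$ of $\{\overline{\massQ}^n|_{\mathcal{F}_\tau}\}_{n=1}^\infty$). The one point it is prudent not to lean on is a naive passage from ${Q^\star}^r\sim\massP$ on $\mathcal{F}_T$ (Remark~\ref{re25}(ii)): the regular part of $Q^\star|_{\mathcal{F}_\tau}$ is not simply the restriction to $\mathcal{F}_\tau$ of ${Q^\star}^r$, since restricting a purely finitely additive measure to a sub-$\sigma$-algebra can produce a non-trivial countably additive contribution, and it is precisely Proposition~\ref{lower} that disposes of this subtlety for us.
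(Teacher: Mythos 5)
Your argument is correct and follows exactly the route the paper intends: the paper places the corollary immediately after Proposition~\ref{lower} and flags it as ``straightforward from (\ref{lby})'', which is precisely the combination you use — the density identification $\frac{d(Q^\star|_{\mathcal{F}_\tau})^r}{d\massP}=\widehat{Y}_\tau$ from Proposition~\ref{lower} together with the strict positivity $\inf_{0\leq t\leq T}\widehat{Y}_t>0$ from (\ref{lby}). Your closing remark about why one cannot simply restrict ${Q^\star}^r$ to $\mathcal{F}_\tau$ is a sensible caution and shows you understood why Proposition~\ref{lower} is needed.
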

An argument similar to the one used in Proposition \ref{lower} shows that
\begin{proposition}
$$g=\lim_{n\rightarrow \infty}\massE^{\massP}\big[\overline{Y}^n_Te_T|\mathcal{F}_\tau\big]=\frac{d\big((Q^\star e_T)|_{\mathcal{F}_\tau}\big)^r}{d\massP}, \massP-a.s.,$$
where $Q^\star e_T$ denotes the linear operator $\langle Q^\star, e_T\cdot\rangle\in \linftys$ and 
\begin{align*}
 &\big((Q^\star e_T)|_{\mathcal{F}_\tau}\big)^r:=\big((Q^\star e^+_T)|_{\mathcal{F}_\tau}\big)^r-\big((Q^\star e^-_T)|_{\mathcal{F}_\tau}\big)^r;\\
 &\big((Q^\star e_T)|_{\mathcal{F}_\tau}\big)^s:=\big((Q^\star e^+_T)|_{\mathcal{F}_\tau}\big)^s-\big((Q^\star e^-_T)|_{\mathcal{F}_\tau}\big)^s.
\end{align*}
\end{proposition}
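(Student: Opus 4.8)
The plan is to identify the $\massL^1(\Omega,\mathcal{F}_\tau,\massP)$-limit $g$ with the density of the regular part of the signed finitely additive measure $(Q^\star e_T)|_{\mathcal{F}_\tau}$, by copying the proof of Proposition \ref{lower} for each of the two positive pieces $Q^\star e_T^+$ and $Q^\star e_T^-$. The first equality in the statement is merely the definition of $g$ in (\ref{con2}), so only the second one requires an argument. Since the regular part of $(Q^\star e_T)|_{\mathcal{F}_\tau}$ has been \emph{defined} through the splitting $e_T=e_T^+-e_T^-$, it suffices to show, for each sign $\varepsilon\in\{+,-\}$, that the $\massP$-a.s.\ limit $g^\varepsilon$ of $\massE^{\massP}\big[\overline{Y}^n_Te_T^\varepsilon\,|\,\mathcal{F}_\tau\big]$ equals $d\big((Q^\star e_T^\varepsilon)|_{\mathcal{F}_\tau}\big)^r/d\massP$, and then to subtract, using $g=g^+-g^-$. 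For the one-sided limits $g^\pm$ to be available in the first place, I would simply feed the $\massL^1$-bounded sequences $\big\{\massE^{\massP}\big[\wt{Y}^n_Te_T^\pm\,|\,\mathcal{F}_\tau\big]\big\}_{n}$ into the Koml\'os argument producing $\{\overline{Y}^n\}_{n}$ in (\ref{con2}) — a harmless strengthening that leaves $g$, $e_\tau$, $\widehat{Y}_\tau$ and all subsequent objects ($Q^\star$, Proposition \ref{lower}, etc.)\ untouched.

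Fix $\varepsilon$ and put $h:=e_T^\varepsilon\in\massL^\infty$, so $0\le h\le\rho$. The structural step is the observation that the map $\linftys\to\big(\massL^\infty(\Omega,\mathcal{F}_\tau,\massP)\big)^*$, $Q\mapsto(Qh)|_{\mathcal{F}_\tau}$ with $\langle Qh,f\rangle:=\langle Q,hf\rangle$, is weak-star continuous (since $hf\in\massL^\infty$ for every $f\in\massL^\infty$, and restriction to $\mathcal{F}_\tau$ is weak-star continuous) and maps $\linftysp$ into the positive cone. Hence, as $Q^\star$ is a weak-star cluster point of $\{\overline{\massQ}^n\}_{n}$, the element $(Q^\star h)|_{\mathcal{F}_\tau}$ is a weak-star cluster point of $\big\{(\overline{\massQ}^n h)|_{\mathcal{F}_\tau}\big\}_{n}$; and a one-line computation identifies $(\overline{\massQ}^n h)|_{\mathcal{F}_\tau}$ with the countably additive measure on $\mathcal{F}_\tau$ of $\massP$-density $\massE^{\massP}\big[\overline{Y}^n_T h\,|\,\mathcal{F}_\tau\big]$. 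Applying Proposition A.1 of \cite{CSW01} exactly as in Proposition \ref{lower} (proceeding as in Corollary 5.2 of \cite{PR14}) then yields convex combinations of the densities $\massE^{\massP}\big[\overline{Y}^n_T h\,|\,\mathcal{F}_\tau\big]$ converging $\massP$-a.s.\ to $d\big((Q^\star h)|_{\mathcal{F}_\tau}\big)^r/d\massP$; since the \emph{whole} sequence already converges $\massP$-a.s.\ to $g^\varepsilon$, uniqueness of $\massP$-a.s.\ limits forces $g^\varepsilon=d\big((Q^\star e_T^\varepsilon)|_{\mathcal{F}_\tau}\big)^r/d\massP$. Subtracting the $\varepsilon=+$ and $\varepsilon=-$ identities and recalling $\big((Q^\star e_T)|_{\mathcal{F}_\tau}\big)^r=\big((Q^\star e_T^+)|_{\mathcal{F}_\tau}\big)^r-\big((Q^\star e_T^-)|_{\mathcal{F}_\tau}\big)^r$ completes the proof.

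I do not anticipate a genuine obstacle; this is a direct adaptation of Proposition \ref{lower}. The points that deserve care are: (a) the weak-star continuity of ``multiply by the fixed bounded function $e_T^\varepsilon$, then restrict to $\mathcal{F}_\tau$'', which is what transports the cluster point $Q^\star$ to $(Q^\star e_T^\varepsilon)|_{\mathcal{F}_\tau}$; (b) the bookkeeping ensuring that the already-fixed $g$ is the $\massP$-a.s.\ limit of the \emph{full} sequence $\massE^{\massP}\big[\overline{Y}^n_Te_T\,|\,\mathcal{F}_\tau\big]$, so that it can be matched with a regular-part density by uniqueness rather than only along a further sub-convex-combination; and (c) the minor normalization required to apply Proposition A.1 to the non-normalized positive measures $\overline{\massQ}^n e_T^\varepsilon$ (divide by $\langle Q^\star,e_T^\varepsilon\rangle$ when the latter is nonzero, the case $e_T^\varepsilon=0$ being trivial). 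Since $Q^\star e_T$ is signed, not positive, it bears emphasizing that no Yosida--Hewitt decomposition of the signed object itself enters — everything is routed through the positive pieces $Q^\star e_T^\pm$.
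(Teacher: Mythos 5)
Your proof is correct and fills in, faithfully and carefully, the details the paper elides behind ``an argument similar to the one used in Proposition \ref{lower}.'' The three points you flag are exactly the ones that need attention: (a) weak-star continuity of $Q\mapsto (Qh)|_{\mathcal{F}_\tau}$ holds because it is the adjoint of the bounded map $f\mapsto hf$ on $\massL^\infty(\mathcal{F}_\tau)\hookrightarrow \massL^\infty(\mathcal{F}_T)$, so cluster points are transported as you claim; (b) enlarging the Koml\'os step to also control $\massE^\massP[\widetilde{Y}^n_T e_T^\pm\,|\,\mathcal{F}_\tau]$ is indeed harmless, since the extra constraints only refine the choice of convex combinations while every previously established a.s.\ limit is preserved, so that $g=g^+-g^-$ is the a.s.\ limit of the full sequence and can be matched against the Proposition A.1 limit along a further convex subsequence; and (c) the normalization is cosmetic. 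This is precisely the route the authors intend by referring back to Proposition \ref{lower} (itself proceeding via the cluster-point observation plus Proposition A.1 in \cite{CSW01}), with the single structural addition — splitting $e_T=e_T^+-e_T^-$ so that Proposition A.1, which lives on the positive cone, applies — already built into the statement's definition of $\big((Q^\star e_T)|_{\mathcal{F}_\tau}\big)^{r}$ and $\big((Q^\star e_T)|_{\mathcal{F}_\tau}\big)^{s}$. No gap.
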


The lemma below is the core of the proof to Theorem \ref{main}:
\begin{lemma}\label{condi}
The random variable $e_\tau$ is the conditional expectation of $e_T$ under $Q^\star$ with respect to $\mathcal{F}_\tau$. In particular, 
$$
\langle Q^\star|_{\mathcal{F}_\tau}, e_\tau\rangle =\langle  Q^\star, e_T\rangle.
$$
\end{lemma}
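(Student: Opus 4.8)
The goal is to show that $e_\tau$, defined in \eqref{proe} as the $\massP$-limit of $\massE^{\widetilde{\massQ}^n}[e_T|\mathcal{F}_\tau]$, equals the $Q^\star$-conditional expectation of $e_T$ given $\mathcal{F}_\tau$; once this is established, pairing with $\mathbf{1}_\Omega$ (which is $\mathcal{F}_\tau$-measurable) gives the displayed identity $\langle Q^\star|_{\mathcal{F}_\tau}, e_\tau\rangle = \langle Q^\star, e_T\rangle$. The natural characterization to verify is that for every $A\in\mathcal{F}_\tau$ one has $\langle Q^\star, \mathbf{1}_A e_T\rangle = \langle Q^\star|_{\mathcal{F}_\tau}, \mathbf{1}_A e_\tau\rangle$; equivalently, using the regular/singular decomposition and the fact that $\langle Q^\star, h\rangle = \massE^\massP[\tfrac{d{Q^\star}^r}{d\massP}\,h] + \langle {Q^\star}^s, h\rangle$, that
$$
\frac{d\big((Q^\star e_T)|_{\mathcal{F}_\tau}\big)^r}{d\massP} = e_\tau\,\frac{d(Q^\star|_{\mathcal{F}_\tau})^r}{d\massP},\qquad \massP\text{-a.s.}
$$
By the two preceding propositions the left-hand side is $g$ and the right-hand side is $e_\tau\,\widehat{Y}_\tau$, and by the last line of \eqref{con2} we indeed have $e_\tau = g/\widehat{Y}_\tau$, so this regular-part identity is already in hand. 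The substance of the lemma is therefore that the \emph{singular} parts also match up, i.e. that $\big((Q^\star e_T)|_{\mathcal{F}_\tau}\big)^s$ acts on $\mathbf{1}_A$ the same way $(Q^\star|_{\mathcal{F}_\tau})^s$ acts on $\mathbf{1}_A e_\tau$ — and this is exactly where Corollary \ref{sing} enters.

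The key step is to exploit Corollary \ref{sing}: on each $\Omega_m$ the singular part $(Q^\star|_{\mathcal{F}_\tau})^s$ vanishes, and the $\Omega_m$ exhaust $\Omega$ up to a $\massP$-null set with $\massP(\Omega_m)\to 1$. Since $|e_T|\leq\rho$ and $|e_\tau|\leq\rho$, both $\big((Q^\star e_T)|_{\mathcal{F}_\tau}\big)^s$ and $(Q^\star|_{\mathcal{F}_\tau})^s$ are dominated in total variation by $\rho\,(Q^\star|_{\mathcal{F}_\tau})^s$, hence they too vanish on every $\Omega_m$. First I would fix $A\in\mathcal{F}_\tau$ and write $A = (A\cap\Omega_m)\cup(A\setminus\Omega_m)$; on $A\cap\Omega_m$ the singular contributions are zero and the regular-part identity above gives $\langle Q^\star, \mathbf{1}_{A\cap\Omega_m} e_T\rangle = \langle Q^\star|_{\mathcal{F}_\tau}, \mathbf{1}_{A\cap\Omega_m} e_\tau\rangle$; on $A\setminus\Omega_m$ both sides are bounded by $\rho\,\langle Q^\star, \mathbf{1}_{\Omega\setminus\Omega_m}\rangle$. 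Letting $m\to\infty$, the error term vanishes because $\langle Q^\star, \mathbf{1}_{\Omega\setminus\Omega_m}\rangle = \massE^\massP\big[\tfrac{d{Q^\star}^r}{d\massP}\mathbf{1}_{\Omega\setminus\Omega_m}\big] + \langle {Q^\star}^s, \mathbf{1}_{\Omega\setminus\Omega_m}\rangle$: the first term tends to $0$ by dominated convergence (as $\massP(\Omega\setminus\Omega_m)\to 0$ and $\tfrac{d{Q^\star}^r}{d\massP}\in\massL^1$), and for the second I would argue via Remark \ref{re25}(iii) together with Proposition \ref{stop} — on $\{\tau=\tau_k\}$ we have $\widehat{W}_\tau\geq 1/k$, so $\Omega\setminus\Omega_m$ can be chosen inside the region where $\widehat{W}_\tau$ is bounded below, away from the set $\{x+(\widehat H\cdot S)_T + e_T < 1/n\}$ on which ${Q^\star}^s$ concentrates; more simply, $\langle{Q^\star}^s,\mathbf 1_{\Omega\setminus\Omega_m}\rangle = \langle(Q^\star|_{\mathcal F_\tau})^s,\mathbf 1_{\Omega\setminus\Omega_m}\rangle \le \langle(Q^\star|_{\mathcal F_\tau})^s,\mathbf 1_{\Omega\setminus\Omega_m}\rangle$ which, since $(Q^\star|_{\mathcal F_\tau})^s$ vanishes on each $\Omega_{m'}$ and is a bounded additive measure, tends to $0$ as $m\to\infty$ because $\bigcup_{m'}\Omega_{m'}$ is $\massP$-conull and ${Q^\star}^r$-conull. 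This yields $\langle Q^\star, \mathbf{1}_A e_T\rangle = \langle Q^\star|_{\mathcal{F}_\tau}, \mathbf{1}_A e_\tau\rangle$ for all $A\in\mathcal{F}_\tau$, which is the assertion of the lemma.

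\textbf{Main obstacle.} The delicate point is the vanishing of the singular mass $\langle{Q^\star}^s,\mathbf 1_{\Omega\setminus\Omega_m}\rangle$ as $m\to\infty$. The $\Omega_m$ come from Egorov's theorem applied to the $\massP$-a.s.\ convergence $\overline Y^n_\tau\to\widehat Y_\tau$, so they are $\massP$-conull in the limit but a priori need not be conull for the finitely additive singular part of $Q^\star$. One must therefore either (a) observe that by Corollary \ref{sing} the singular part $(Q^\star|_{\mathcal F_\tau})^s$ vanishes on \emph{each} $\Omega_m$, and combine this with a disjointification $\Omega_m\setminus\Omega_{m-1}$ argument plus the finite total mass of $(Q^\star|_{\mathcal F_\tau})^s$ to force $\langle(Q^\star|_{\mathcal F_\tau})^s,\mathbf 1_{\bigcup_m\Omega_m}\rangle$ to exhaust the whole mass, hence $\langle(Q^\star|_{\mathcal F_\tau})^s, \mathbf 1_{\Omega\setminus\bigcup_m\Omega_m}\rangle = 0$; or (b) use purely finite additivity: since $(Q^\star|_{\mathcal F_\tau})^s$ is purely finitely additive it is carried on sets of arbitrarily small $\massP$-measure, and by (a) it gives zero to each $\Omega_m$, so by the sandwich $0\le\langle(Q^\star|_{\mathcal F_\tau})^s,\mathbf 1_{\Omega\setminus\Omega_m}\rangle\le\|(Q^\star|_{\mathcal F_\tau})^s\|$ together with monotone exhaustion one concludes. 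Getting this finitely-additive bookkeeping exactly right — and making sure the convergences in \eqref{con2} and the identifications of $g$ and $\widehat Y_\tau$ with the regular parts are used consistently — is the crux; the rest is routine manipulation of the Yosida–Hewitt decomposition and the boundedness of $e_T$.
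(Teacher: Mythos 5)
Your plan of reducing the lemma to a regular-part identity together with an estimate of the error on $\Omega\setminus\Omega_m$ does not close, and the issue is exactly the one you yourself flag as the ``main obstacle'': the term $\langle(Q^\star|_{\mathcal F_\tau})^s,\mathbf 1_{\Omega\setminus\Omega_m}\rangle$ does \emph{not} tend to $0$. Since Corollary~\ref{sing} says $(Q^\star|_{\mathcal F_\tau})^s$ vanishes on every $\Omega_m$, additivity gives $\langle(Q^\star|_{\mathcal F_\tau})^s,\mathbf 1_{\Omega\setminus\Omega_m}\rangle=\|(Q^\star|_{\mathcal F_\tau})^s\|$ for \emph{every} $m$ --- the singular mass, if any, sits entirely outside each $\Omega_m$, and this is constant in $m$. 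Your option (a) is backwards (vanishing on each $\Omega_m$ would make a countably additive measure vanish on the union, not exhaust it there), and option (b) appeals to ``monotone exhaustion'', which is precisely what fails for purely finitely additive measures. So the error term is not small unless one already knows $(Q^\star|_{\mathcal F_\tau})^s=0$, but that fact is only established \emph{later} (Proposition~\ref{yui}), and using it here would be circular. You cannot verify the conditional-expectation identity $\langle Q^\star,\mathbf 1_A e_T\rangle=\langle Q^\star|_{\mathcal F_\tau},\mathbf 1_A e_\tau\rangle$ for arbitrary $A\in\mathcal F_\tau$ by a cut-and-estimate across $\Omega_m$ at this stage of the argument.

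The paper avoids the trap by never estimating the singular mass off $\Omega_m$. It first invokes an abstract existence/uniqueness result (Luxemburg, Definition~7.1 and Theorem~7.2 of \cite{Lux91}) to produce the $Q^\star$-conditional expectation $\eta\in\mathcal F_\tau$ of the bounded $e_T$, and then \emph{identifies} it: for $A\subset\Omega_m$ in $\mathcal F_\tau$ both singular parts vanish, the chain of equalities gives $\eta=\dfrac{d((Q^\star e_T)|_{\mathcal F_\tau})^r/d\massP}{d(Q^\star|_{\mathcal F_\tau})^r/d\massP}=e_\tau$, $(Q^\star|_{\mathcal F_\tau})^r$-a.s.\ on $\Omega_m$, hence $\massP$-a.s.\ on $\Omega_m$ by Corollary~\ref{equiv}, and then $m\to\infty$. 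This is a \emph{pointwise} ($\massP$-a.s.) statement, so nothing about $\Omega\setminus\Omega_m$ needs to be controlled. The ``In particular'' display then follows because $Q^\star|_{\mathcal F_\tau}$, being an element of $(\massL^\infty(\Omega,\mathcal F_\tau,\massP))^*$, annihilates $\massP$-null sets, so $\massP$-a.s.\ equality of $\eta$ and $e_\tau$ implies $\langle Q^\star|_{\mathcal F_\tau},\eta\rangle=\langle Q^\star|_{\mathcal F_\tau},e_\tau\rangle$. In short: the missing idea in your proposal is to characterize the conditional expectation up to a $\massP$-null set rather than test it against all $A\in\mathcal F_\tau$, so that you never have to touch the part of $\Omega$ where the singular mass lives.
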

\begin{proof}
It follows from the boundedness of $e_T$, there exists a unique random variable $\eta\in \mathcal{F}_\tau$ (see Definition 7.1 and Theorem 7.2 in \cite{Lux91}), such that for each $A\in \mathcal{F}_\tau$, 
$$
\langle Q^\star|_{\mathcal{F}_\tau}\eta, {\bf 1}_{A}\rangle=\langle Q^\star e_T, {\bf 1}_{A}\rangle.
$$ 
Our aim now is to prove 
$$
\eta=e_\tau=\frac{\frac{d\big((Q^\star e_T)|_{\mathcal{F}_\tau}\big)^r}{d\massP}}{\frac{d(Q^\star|_{\mathcal{F}_\tau})^r}{d\massP}},\ \massP-a.s.
$$
It is evident that $Q^\star e_T$ is a cluster point of the sequence $\{\overline{\massQ}^ne_T\infseq$, and similar to Corollary \ref{sing}, we know that for each $m\in\NN$, $((Q^\star e_T)|_{\mathcal{F}_\tau})^s$ vanishes on $\Omega_m$. Then, for any $A\subset \Omega_m$, $A\in \mathcal{F}_\tau$, 
\begin{align*}
\left\langle (Q^\star|_{\mathcal{F}_\tau})^r, \frac{\frac{d\big((Q^\star e_T)|_{\mathcal{F}_\tau}\big)^r}{d\massP}}{\frac{d(Q^\star|_{\mathcal{F}_\tau})^r}{d\massP}}{\bf 1}_A \right\rangle
&=\massE^\massP\left[\frac{d\big((Q^\star e_T)|_{\mathcal{F}_\tau}\big)^r}{d\massP}{\bf 1}_A \right]=\big\langle \big((Q^\star e_T)|_{\mathcal{F}_\tau}\big)^r , {\bf 1}_A\big\rangle\\
=\langle (Q^\star e_T)|_{\mathcal{F}_\tau} , {\bf 1}_A\rangle&=\langle Q^\star e_T , {\bf 1}_A\rangle=\langle Q^\star|_{\mathcal{F}_\tau}\eta, {\bf 1}_{A}\rangle=\left\langle \left(Q^\star|_{\mathcal{F}_\tau}\right)^r, \eta{\bf 1}_{A}\right\rangle,
\end{align*}
which implies $\eta=e_\tau$, $(Q^\star|_{\mathcal{F}_\tau})^r$-a.s.~on $\Omega_m$. Thanks to Corollary \ref{equiv}, $\eta=e_\tau$, $\massP$-a.s.~on $\Omega_m$. Letting $m\rightarrow \infty$, we end the proof.
\end{proof}
\subsection{Proof of the main result} In this subsection, we show that  $\{\overline{Y}^n_{\tau}\infseq$ is uniformly integrable so that $\widehat{Y}_{\cdot\wedge \tau}$ is a martingale. Then, substituting $\tau$ by $\tau_k$, $k\in\mathbb{N}$, we can conclude that $\widehat{Y}$ is a local martingale. 

\vspace{2mm}

Let us first consider a dynamic version of (\ref{optequ}): 
\begin{proposition}
\begin{equation*}
 \big\langle Q^\star|_{\mathcal{F}_\tau}, \widehat{W}_\tau\big\rangle=\big\langle Q^\star|_{\mathcal{F}_\tau}, x+(\widehat{H}\cdot S)_\tau+e_\tau\big\rangle =  x+\langle Q^\star, e_T\rangle.
\end{equation*}
\end{proposition}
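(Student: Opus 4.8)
The goal is to establish a ``dynamic'' version of the optimality equation (\ref{optequ}) at the stopping time $\tau$. The first identity is purely notational, coming from the decomposition $\widehat{W}_\tau = x + (\widehat{H}\cdot S)_\tau + e_\tau$, so the real content is the second equality. My plan is to split $\langle Q^\star|_{\mathcal{F}_\tau}, \widehat{W}_\tau\rangle$ into the three pieces $x\langle Q^\star|_{\mathcal{F}_\tau}, 1\rangle$, $\langle Q^\star|_{\mathcal{F}_\tau}, (\widehat{H}\cdot S)_\tau\rangle$, and $\langle Q^\star|_{\mathcal{F}_\tau}, e_\tau\rangle$, and handle each separately. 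For the first, since $Q^\star \in \cD$ has unit norm and is positive, $\langle Q^\star, 1\rangle = 1$, and the restriction to $\mathcal{F}_\tau$ does not change the pairing against the constant $1$, so this term is simply $x$. For the third term, Lemma \ref{condi} directly gives $\langle Q^\star|_{\mathcal{F}_\tau}, e_\tau\rangle = \langle Q^\star, e_T\rangle$. Thus everything reduces to showing that the middle term vanishes: $\langle Q^\star|_{\mathcal{F}_\tau}, (\widehat{H}\cdot S)_\tau\rangle = 0$.

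To see that $\langle Q^\star|_{\mathcal{F}_\tau}, (\widehat{H}\cdot S)_\tau\rangle = 0$, I would argue by a double inequality, exploiting that $(\widehat{H}\cdot S)$ is admissible, hence uniformly bounded from below, and that $(\widehat{H}\cdot S)_\tau$ is a stopped stochastic integral which is therefore also admissible (so $(\widehat{H}\cdot S)_\tau = ((\widehat{H}\mathbf{1}_{\llbracket 0,\tau\rrbracket})\cdot S)_T \in \mathcal{C}_0$, and likewise $-(\widehat{H}\cdot S)_\tau$ need not be admissible, so only one direction comes for free from the definition of $\cD$). Concretely: since $(\widehat{H}\cdot S)_\tau$ truncated from above lies in $\cC$ (it is bounded below by admissibility and we truncate above), and since $Q^\star \in \cD$ satisfies $\langle Q^\star, g\rangle \le 0$ for all $g \in \cC$, passing to the limit as the truncation level $M\to\infty$ as in Remark \ref{re25}(i) gives $\langle Q^\star|_{\mathcal{F}_\tau}, (\widehat{H}\cdot S)_\tau\rangle \le 0$. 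For the reverse inequality, I would use the sequence $\{\overline{\massQ}^n\}$: each $\overline{\massQ}^n \in \mathcal{M}$ is an ELMM, so $(\widehat{H}\cdot S)$ is a local martingale under $\overline{\massQ}^n$, and being bounded below it is an $\overline{\massQ}^n$-supermartingale; combined with the fact (used already in Proposition \ref{zmart}) that its $\overline{\massQ}^n$-expectation must actually be zero at $\tau$ — because $\widetilde{W}^n$ has constant expectation $x + \langle\widetilde{\massQ}^n, e_T\rangle$ at the terminal time and the ``no loss of mass'' structure forces equality at $\tau$ for the chosen stopping times where $\widehat{W}$ is bounded away from $0$ — one gets $\massE^{\overline{\massQ}^n}[(\widehat{H}\cdot S)_\tau] \to 0$. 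Then the weak-star convergence $\overline{\massQ}^n|_{\mathcal{F}_\tau} \to Q^\star|_{\mathcal{F}_\tau}$, together with uniform integrability of $\{\overline{Y}^n_\tau\}$ on each $\Omega_m$ from (\ref{ui}) and the boundedness below of $(\widehat{H}\cdot S)_\tau$, allows passage to the limit to conclude $\langle Q^\star|_{\mathcal{F}_\tau}, (\widehat{H}\cdot S)_\tau\rangle \ge 0$ on $\Omega_m$, hence everywhere as $m\to\infty$ by Corollary \ref{equiv}.

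The main obstacle I anticipate is the careful treatment of the unbounded-from-above random variable $(\widehat{H}\cdot S)_\tau$ inside the finitely additive pairing: one must be sure that the singular part $(Q^\star|_{\mathcal{F}_\tau})^s$ does not contribute positively. This is exactly where Corollary \ref{sing} — that $(Q^\star|_{\mathcal{F}_\tau})^s$ vanishes on each $\Omega_m$ — does the work, reducing the pairing on $\Omega_m$ to an honest $\massP$-integral against $(Q^\star|_{\mathcal{F}_\tau})^r = \widehat{Y}_\tau$ (Proposition \ref{lower}), where dominated/monotone convergence applies cleanly; then one lets $m\to\infty$ using $\massP(\Omega_m)\to 1$ and the $\massP$-equivalence of $(Q^\star|_{\mathcal{F}_\tau})^r$. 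An equivalent and perhaps cleaner route to the identity is to observe directly that $\widehat{Z}_\tau = \widehat{Y}_\tau \widehat{W}_\tau$ has $\massP$-expectation equal to $\widehat{Z}_0 = x + \langle\widehat{Q}, e_T\rangle$ by the martingale property of $\widehat{Z}$ from Proposition \ref{zmart}, and to match $\massE^\massP[\widehat{Y}_\tau\widehat{W}_\tau]$ with $\langle Q^\star|_{\mathcal{F}_\tau}, \widehat{W}_\tau\rangle$ via Proposition \ref{lower} and Lemma \ref{condi} on each $\Omega_m$; this is the formulation I would ultimately write up, since it isolates the one genuinely new input (Lemma \ref{condi}) and otherwise only invokes results already in hand.
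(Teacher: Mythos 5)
Your decomposition into the three terms $x$, $\langle Q^\star|_{\mathcal F_\tau},(\widehat H\cdot S)_\tau\rangle$, and $\langle Q^\star|_{\mathcal F_\tau},e_\tau\rangle$ is the right reduction, and you correctly identify that the content is $\langle Q^\star|_{\mathcal F_\tau},(\widehat H\cdot S)_\tau\rangle=0$ (given Lemma~\ref{condi}). The inequality $\leq 0$ from the definition of $\cD$ is also correct. The gap is in your argument for $\geq 0$. You claim $\massE^{\overline{\massQ}^n}[(\widehat H\cdot S)_\tau]\to 0$ because ``$\widetilde W^n$ has constant expectation at the terminal time,'' but this is not established: $(\widehat H\cdot S)$ is only a $\widetilde{\massQ}^n$-supermartingale, so $\massE^{\widetilde{\massQ}^n}[(\widehat H\cdot S)_\tau]\leq 0$ with no equality known; the ``no loss of mass'' you invoke is essentially what the whole section is trying to prove. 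Worse, even granting that limit, passage to the limit against a random variable bounded only from below would give $\langle Q^\star|_{\mathcal F_\tau},(\widehat H\cdot S)_\tau\rangle\leq\liminf_n\massE^{\overline{\massQ}^n}[(\widehat H\cdot S)_\tau]$ (truncate above, use weak-star convergence on the truncation, then let the truncation level rise), which points in the wrong direction and can never yield $\geq 0$.

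Your ``cleaner route'' at the end is where the paper actually goes, but as written it is also incomplete. From Proposition~\ref{lower} and the martingale property of $\widehat Z$ you get
$\langle(Q^\star|_{\mathcal F_\tau})^r,\widehat W_\tau\rangle=\massE^\massP[\widehat Y_\tau\widehat W_\tau]=\widehat Z_0=x+\langle Q^\star,e_T\rangle$,
and since $(Q^\star|_{\mathcal F_\tau})^s\geq 0$ and $\widehat W_\tau\geq 0$ this is only a \emph{lower} bound for $\langle Q^\star|_{\mathcal F_\tau},\widehat W_\tau\rangle$; the $\Omega_m$ localization does not by itself promote the regular-part identity to the full pairing, because mass of the singular part may escape. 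To close the argument you must couple this lower bound with the \emph{upper} bound you already derived in your first route, namely $\langle Q^\star|_{\mathcal F_\tau},\widehat W_\tau\rangle\leq x+0+\langle Q^\star,e_T\rangle$ from the definition of $\cD$ and Lemma~\ref{condi}. The sandwich then forces equality everywhere (and, incidentally, shows $\langle(Q^\star|_{\mathcal F_\tau})^s,\widehat W_\tau\rangle=0$, which is precisely what the next proposition exploits). So you possess both halves of the proof but present them as two alternative, each-incomplete routes rather than as the two inequalities of a single pinching argument; and the intermediate claim about $\massE^{\overline{\massQ}^n}[(\widehat H\cdot S)_\tau]$ should be discarded outright.
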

\begin{proof}
Since $Q^\star\in \mathcal{D}$, by definition, $\langle Q^\star|_{\mathcal{F}_\tau}, (\widehat{H}\cdot S)_\tau\rangle =\langle Q^\star, (\widehat{H}\cdot S)_\tau\rangle\leq 0$. Thus, from the positivity of $(Q^\star|_{\mathcal{F}_\tau})^s$, the martingale property of $\widehat{Y}\widehat{W}$ together with Lemma \ref{condi}, we obtain
$$x+\langle Q^\star, e_T\rangle=\langle(Q^\star|_{\mathcal{F}_\tau})^r, x+(\widehat{H}\cdot S)_\tau+e_\tau\rangle\leq  \langle Q^\star|_{\mathcal{F}_\tau}, x+(\widehat{H}\cdot S)_\tau+e_\tau\rangle \leq x+\langle Q^\star, e_T\rangle.
$$
\end{proof}
Now we can deduce that $\{\overline{Y}^n_\tau\infseq$ is uniformly integrable.

\begin{proposition}\label{yui}
The sequence of random variables $\big\{\overline{Y}^n_\tau\big\}_{n=1}^{\infty}=\big\{\massE^\massP\big[\frac{d\overline{\massQ}^n}{d\massP}|{\cF_\tau}\big]\big\}_{n=1}^{\infty}$ is uniformly integrable and 
$$\overline{Y}^n_{\tau}\stackrel{\massL^1}{\longrightarrow} \widehat{Y}_{\tau},\ {\rm as}\ n\rightarrow \infty.$$
\end{proposition}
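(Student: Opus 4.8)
\textbf{Proof proposal for Proposition \ref{yui}.}

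The plan is to extract uniform integrability from the chain of equalities just established, exploiting the fact that $\widehat{Y}$ loses no mass up to time $\tau$. First I would record that $\massE^\massP[\widehat{Y}_\tau]\le 1$ by Fatou (this was already noted in \eqref{ui}), so it suffices to prove the reverse inequality $\massE^\massP[\widehat{Y}_\tau]\ge 1$; combined with the $\massP$-a.s.\ convergence $\overline{Y}^n_\tau\to\widehat{Y}_\tau$ on each $\Omega_m$ (Egorov) and the fact that each $\overline{Y}^n$ is a genuine $\massP$-martingale with $\massE^\massP[\overline{Y}^n_\tau]=1$, a Scheff\'e-type argument will upgrade a.s.\ convergence to $\massL^1$-convergence and yield uniform integrability of $\{\overline{Y}^n_\tau\}_{n}$ on all of $\Omega$.

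To get $\massE^\massP[\widehat{Y}_\tau]\ge 1$, the idea is to test the finitely additive measure $Q^\star|_{\mathcal{F}_\tau}$ against the constant function $\mathbf 1$ and against $\widehat{W}_\tau$. On the one hand, $\|Q^\star|_{\mathcal{F}_\tau}\|=\langle Q^\star|_{\mathcal{F}_\tau},\mathbf 1\rangle=1$ (the total mass of $Q^\star$ restricted to a sub-$\sigma$-algebra is still $1$, since $\mathbf 1\in\massL^\infty(\mathcal F_\tau)$), and this splits as $\massE^\massP[\widehat{Y}_\tau]+\langle(Q^\star|_{\mathcal{F}_\tau})^s,\mathbf 1\rangle=1$ using Proposition \ref{lower}. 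On the other hand, from the dynamic optimality equality $\langle Q^\star|_{\mathcal{F}_\tau},\widehat{W}_\tau\rangle=x+\langle Q^\star,e_T\rangle=\langle(Q^\star|_{\mathcal{F}_\tau})^r,\widehat{W}_\tau\rangle$ we read off $\langle(Q^\star|_{\mathcal{F}_\tau})^s,\widehat{W}_\tau\rangle=0$. Since $\widehat{W}_{\cdot\wedge\tau}\ge 1/k$ by Proposition \ref{stop} and $(Q^\star|_{\mathcal{F}_\tau})^s\ge 0$, positivity forces $\langle(Q^\star|_{\mathcal{F}_\tau})^s,\mathbf 1\rangle\le k\,\langle(Q^\star|_{\mathcal{F}_\tau})^s,\widehat{W}_\tau\rangle=0$; hence $(Q^\star|_{\mathcal{F}_\tau})^s$ vanishes identically and $\massE^\massP[\widehat{Y}_\tau]=1$. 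This is the crux: the stopping time $\tau=\tau_k$ was precisely engineered so that $\widehat W$ is bounded below by $1/k$, which is what lets a positive singular measure be controlled by its action on $\widehat W$.

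With $\massE^\massP[\overline{Y}^n_\tau]=1=\massE^\massP[\widehat{Y}_\tau]$ and $\overline{Y}^n_\tau\to\widehat{Y}_\tau$ $\massP$-a.s., Scheff\'e's lemma gives $\overline{Y}^n_\tau\to\widehat{Y}_\tau$ in $\massL^1$, and an $\massL^1$-convergent sequence is uniformly integrable; this closes the proof. I would then remark that, applying this with $\tau=\tau_k$ for each $k$ and using $\massP(\tau_k=T)\nearrow 1$, the limit identifies $\widehat{Y}_{\cdot\wedge\tau_k}$ as a true (c\`adl\`ag, by Assumption \ref{assfil}) martingale for every $k$, so $\widehat{Y}$ is a local martingale, completing Theorem \ref{main}. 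The main obstacle is the argument in the previous paragraph showing the singular part of $Q^\star|_{\mathcal F_\tau}$ genuinely disappears --- everything hinges on correctly pairing the finitely additive functional with the lower-bounded wealth $\widehat W_\tau$ and on having the dynamic version of \eqref{optequ} available with both inequalities tight.
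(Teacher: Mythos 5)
Your proof is correct and follows essentially the same route as the paper: use the dynamic optimality identity to get $\langle(Q^\star|_{\mathcal{F}_\tau})^s,\widehat{W}_\tau\rangle=0$, kill the singular part via the lower bound on $\widehat{W}_\tau$, deduce $\massE^\massP[\widehat{Y}_\tau]=1$ from $\|Q^\star\|_{(\massL^\infty)^*}=1$, and apply Scheff\'e. You are slightly more careful than the paper at the step where $(Q^\star|_{\mathcal{F}_\tau})^s\equiv 0$ is concluded: the paper merely invokes $\widehat{W}_\tau>0$ $\massP$-a.s.\ (with a pointer to Proposition~\ref{stop}), whereas you make explicit that it is the uniform lower bound $\widehat{W}_\tau\ge 1/k$ that allows one to dominate $\langle(Q^\star|_{\mathcal{F}_\tau})^s,\mathbf 1\rangle$ by $k\langle(Q^\star|_{\mathcal{F}_\tau})^s,\widehat{W}_\tau\rangle=0$, which is indeed the correct justification.
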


\begin{proof}
From the proof the proposition above, we see that 
 $$ \big\langle(Q^\star|_{\mathcal{F}_\tau})^s, x+(\widehat{H}\cdot S)_\tau+e_\tau\big\rangle=0,$$
on the other hand, according to Proposition \ref{stop}, $\widehat{W}_\tau=x+(\widehat{H}\cdot S)_\tau+e_\tau>0$, $\massP$-a.s. Thus, we derive that $(Q^\star|_{\mathcal{F}_\tau})^s\equiv 0$. Recalling that $\|Q^\star\|_{\linftys}=1$, we have
\begin{equation*}\massE^\massP\big[\widehat{Y}_{\tau}\big]=\massE^\massP\left[\frac{d(Q^\star|_{\mathcal{F}_\tau})^r}{d\massP}\right]=1.\end{equation*}
We summarize as follows
$$
\massE^\massP\big[\overline{Y}^n_{\tau}\big]=1\ {\rm and}\ \overline{Y}^n_{\tau}\longrightarrow \widehat{Y}_\tau,\ \massP-a.s.,\ {\rm as}\ n\rightarrow \infty.
$$
The desired result follows by Scheff\'e's lemma (compare with Lemma \ref{keylemma}).
\end{proof}

\noindent \textbf{Proof of Theorem \ref{main}:} For each $\tau_k$ defined in Proposition \ref{stop}, it follows by Proposition \ref{yui} that 
$$\overline{Y}^n_{\tau_k}\stackrel{\massL^1}{\longrightarrow} \widehat{Y}_{\tau_k},\ {\rm as}\ n\rightarrow \infty.$$
From the martingale property and (\ref{pro}), we also have for $t\in [0, T]$,
$$\overline{Y}^n_{t\wedge\tau_k}\stackrel{\massL^1}{\longrightarrow} \widehat{Y}_{t\wedge\tau_k},\ {\rm as}\ n\rightarrow \infty,$$
which implies $\widehat{Y}_{\cdot\wedge \tau_k}$ is a martingale. By the definition of $\{\tau_k\}_{k=1}^\infty$, $\widehat{Y}$ is a local martingale. As already discussed in the previous section, $\widehat{Y}$ is consequently a 
supermartingale deflator, i.e., $\widehat{Y}$ belongs to $\mathcal{Y}(1)$.\hfill$\square$
%
\begin{remark} Indeed, for each $k$, the dual optimizer $Q^\star$ we constructed generates an ELMM on $\llbracket 0, \tau_k\rrbracket$ such that the pricing of the fictional random endowment $e_{\tau_k}$ under this measure is exact $\langle \widehat{Q}, e_T\rangle$, in particular, $\langle \widehat{Q}, e_T\rangle$ is an arbitrage-free price for $e_{\tau_k}$.
\end{remark}
\begin{remark}\label{inta}
We would like to explain a little about the dynamics of $Q^\star|_{\mathcal{F}_{\cdot}}$. Clearly, the underlying price process $S$ and local martingale $\widehat{Y}$ is continuous, so is the wealth process $\widehat{W}$. Consider a set $A\in \mathcal{F}_\tau$, where $\tau$ is a $[0, T]$-valued stopping time, and suppose $\widehat{W}_\tau$ is strictly positive on $A$. For an infinitesimal $\delta t$ such that $\widehat{W}_{\tau+\delta t}$ cannot suddenly jump to $0$, we can show that if $(Q^\star|_{\mathcal{F}_{\tau}})^s\equiv 0$ on $A$, then $(Q^\star|_{\mathcal{F}_{\tau+\delta t}})^s\equiv 0$ on $A$. Otherwise, 
\begin{align*}
\langle Q^\star|_{\mathcal{F}_{\tau+\delta}}, \widehat{W}_{\tau+\delta}{\bf 1}_A\rangle&= \langle(Q^\star|_{\mathcal{F}_{\tau+\delta}})^r, \widehat{W}_{\tau+\delta}{\bf 1}_A\rangle+\langle(Q^\star|_{\mathcal{F}_{\tau+\delta}})^s, \widehat{W}_{\tau+\delta}{\bf 1}_A\rangle\\
&>\langle(Q^\star|_{\mathcal{F}_{\tau+\delta}})^r, \widehat{W}_{\tau+\delta}{\bf 1}_A\rangle=\langle(Q^\star|_{\mathcal{F}_{\tau}})^r, \widehat{W}_{\tau}{\bf 1}_A\rangle=\langle Q^\star|_{\mathcal{F}_{\tau}}, \widehat{W}_{\tau}{\bf 1}_A\rangle.
\end{align*}
This implies $\big\langle Q^\star, \big((\widehat{H}\cdot S)_{\tau+\delta t}-(\widehat{H}\cdot S)_{\tau}\big){\bf 1}_A\big\rangle>0$, which is a contradiction to the definition of $\cD$.
\end{remark}
\section{Appendix}
\subsection{Some results on $(\massL^\infty)^*$} We state and prove some known results in the space $(\massL^\infty)^*$ for the convenience of the reader. A detailed discussion can be found in \cite{DS67, RR83, YH52}. 

\vspace{2mm}

Let $(\Omega, \mathcal{F}, \massP)$ be the underlying probability space and $(\massL^\infty(\Omega, \mathcal{F}, \massP))^*$ be the dual space of $\massL^\infty(\Omega, \mathcal{F}, \massP)$. Unless otherwise specified, we employ the notation $(\massL^\infty)^*$ and $\massL^\infty$ to denote these two spaces for the simplicity's sake. Denote by $(\massL^\infty)^*_+$ the set of all nonnegative elements in $\linftys$. For any $Q\in \linftysp$, there exists a unique decomposition 
$$
Q=Q^r+Q^s,\ Q^r\geq 0,\ Q^s\geq 0,
$$  
where $Q^r$ is countably additive and called the regular part, $Q^r\ll\massP$, and $Q^s$ is purely finitely additive and called the singular part. In particular, for any $\varepsilon >0$, there exists a set $A_\varepsilon\in \mathcal{F}$, such that $\massP(A_\varepsilon)>1-\varepsilon$ and $\langle Q^s, {\bf 1}_{A_\varepsilon}\rangle=0$.
\begin{proposition}\label{propcont}
Suppose $\widetilde{\cD}$ is a convex subset of $\massL^1_+$, which is also a subset of $\linftysp$ via the canonical embedding. Denote by $\cD$ the weak-star closure of $\widetilde{\cD}$ in $\linftysp$. Then, for each $Q\in \cD$, there exists a sequence $\{Q^n\}_{n=1}^\infty\subset \widetilde{\cD}$, such that $Q^n\rightarrow Q^r$, $\massP$-a.s.,  as $n\rightarrow \infty$. 
\end{proposition}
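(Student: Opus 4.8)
The statement to prove is Proposition~\ref{propcont}: given a convex subset $\widetilde{\cD}\subset\massL^1_+$, embedded in $\linftysp$, and $\cD$ its weak-star closure, every $Q\in\cD$ has a sequence $\{Q^n\}\subset\widetilde{\cD}$ with $Q^n\to Q^r$ $\massP$-a.s.

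\textbf{Proof proposal.} The plan is to combine a weak-star approximation with a Koml\'os-type argument to upgrade weak-star convergence to almost sure convergence of the regular parts. First I would fix $Q\in\cD$. By definition of the weak-star closure, there is a net in $\widetilde{\cD}$ converging to $Q$ in $\sigma(\linftys,\llinfty)$; however, to extract a \emph{sequence} I would instead argue via the density process / truncation against countably many test functions. Concretely, consider the indicator $\Ind{A}$ for $A\in\cF$: weak-star convergence $Q_\alpha\to Q$ gives $\langle Q_\alpha,\Ind{A}\rangle\to\langle Q,\Ind{A}\rangle$, and since $Q_\alpha\in\massL^1_+$ one has $\langle Q_\alpha,\Ind{A}\rangle=\massE^\massP[Q_\alpha\Ind{A}]$. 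The key point is that for the limit, $\langle Q,\Ind{A}\rangle=\langle Q^r,\Ind{A}\rangle+\langle Q^s,\Ind{A}\rangle$, and since $Q^s$ is purely finitely additive, for any $\varepsilon>0$ there is $A_\varepsilon$ with $\massP(A_\varepsilon)>1-\varepsilon$ and $\langle Q^s,\Ind{A_\varepsilon}\rangle=0$; restricting attention to such sets, $Q_\alpha$ tests against $\llinfty(A_\varepsilon)$-functions converging weakly to $Q^r$ there.

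Next I would convexify: since $\widetilde{\cD}$ is convex, the weak-star closure and the $\sigma(\massL^1,\massL^\infty)$-closure interact well, and by a diagonal/exhaustion procedure over the sets $\Omega_m$ with $\massP(\Omega_m)\nearrow 1$ and $\langle Q^s,\Ind{\Omega_m}\rangle=0$, I obtain on each $\Omega_m$ that $Q^r\Ind{\Omega_m}$ lies in the $\massL^1(\Omega_m)$-weak closure of $\{Q_\alpha\Ind{\Omega_m}\}$. Because convex sets have the same weak and norm closures in $\massL^1$ (Hahn-Banach), I can pass from a net to a sequence: there exists $\{Q^n\}\subset\widetilde{\cD}$ (after taking convex combinations, which stay in $\widetilde{\cD}$) with $Q^n\Ind{\Omega_m}\to Q^r\Ind{\Omega_m}$ in $\massL^1(\Omega_m)$-norm for each $m$, by diagonalization over $m$. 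Finally, $\massL^1$-norm convergence yields a.s.\ convergence along a further subsequence on each $\Omega_m$, and since $\bigcup_m\Omega_m$ has full measure, a single diagonal subsequence gives $Q^n\to Q^r$ $\massP$-a.s.\ on all of $\Omega$.

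\textbf{Main obstacle.} The delicate step is handling the singular part $Q^s$: weak-star convergence of $Q_\alpha$ to $Q$ controls $\langle Q_\alpha,f\rangle\to\langle Q,f\rangle$ for \emph{all} $f\in\llinfty$, including $f=\Ind{\Omega_m^c}$, where the mass $\langle Q_\alpha,\Ind{\Omega_m^c}\rangle$ need not vanish even though $\langle Q^s,\Ind{\Omega_m}\rangle=0$; one must be careful that the $\massL^1$-mass of $Q_\alpha$ that "escapes to singularity" is confined to the small sets $\Omega_m^c$ and does not corrupt the convergence on $\Omega_m$. The resolution is that on $\Omega_m$ the functionals $f\mapsto\langle Q_\alpha,f\Ind{\Omega_m}\rangle$ are precisely integration against $Q_\alpha\Ind{\Omega_m}\in\massL^1_+$, their limit agrees with $\langle Q^r,f\Ind{\Omega_m}\rangle$ (the singular part contributing nothing there), and uniform integrability of $\{Q_\alpha\Ind{\Omega_m}\}$ — which follows since these are bounded in $\massL^1$ and tight by construction of $\Omega_m$ — allows the weak-to-strong and then strong-to-a.s.\ upgrade. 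A secondary technical point is the net-to-sequence reduction, which is where convexity of $\widetilde{\cD}$ is essential (Mazur's lemma / Hahn–Banach), mirroring exactly the convex-combination arguments used repeatedly in the body of the paper.
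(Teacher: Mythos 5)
Your main argument follows the paper's proof almost exactly: both exhaust $\Omega$ by increasing sets $\Omega_m$ (the paper calls them $A_n$) with $\massP(\Omega_m)\uparrow 1$ on which the singular part $Q^s$ vanishes, observe that $Q|_{\Omega_m}=Q^r|_{\Omega_m}\in\massL^1$ is then a $\sigma(\massL^1,\massL^\infty)$-limit of the convex set $\widetilde{\cD}|_{\Omega_m}$, invoke Hahn--Banach/Mazur to replace the weak closure by the $\massL^1$-norm closure, extract an explicit sequence $Q^n\in\widetilde{\cD}$ with $\|Q^n|_{\Omega_n}-Q^r|_{\Omega_n}\|_{\massL^1}<2^{-n}$, and finish by $\|Q^r|_{\Omega_n}-Q^r\|_{\massL^1}\to 0$ plus a subsequence extraction to get $\massP$-a.s.\ convergence. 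This is the right and complete route; no genuinely different idea is introduced.

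One caveat on your \textbf{Main obstacle} paragraph: the appeal to \emph{uniform integrability} of $\{Q_\alpha\Ind{\Omega_m}\}$ is both unnecessary and incorrect as stated. Being bounded in $\massL^1$ and ``tight by construction of $\Omega_m$'' does not imply uniform integrability (a sequence of $\massL^1$-normalized bump functions concentrating inside a fixed $\Omega_m$ is bounded and supported in $\Omega_m$ but not UI), and in any case UI is not what drives the weak-to-norm upgrade --- that is purely the Hahn--Banach/Mazur fact that a convex subset of $\massL^1$ has coinciding $\sigma(\massL^1,\massL^\infty)$- and norm closures, which you already invoke correctly in the main argument. If you drop the UI sentence the rest of the ``obstacle'' paragraph is accurate: on $\Omega_m$ the limit functional is regular, so the singular mass of the approximants can only ``leak'' into the small sets $\Omega_m^c$ and never corrupts the $\massL^1(\Omega_m)$ limit.
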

\begin{proof}
Fixing $Q\in \cD$, for each $n\in \NN$, there exists a set $A_n\in \mathcal{F}$, such that $\massP(A_n)>1-\frac{1}{2^n}$ and $Q^s$ is null on $A_n$. By the definition of $\cD$, we see that $Q$ is a weak-star limit point of $\widetilde{\cD}$ and thus, $Q|_{A_n}\in(\massL^\infty)^*$ is also a weak-star limit point of $\widetilde{\cD}|_{A_n}$, where
$
\widetilde{\cD}|_{A_n}:=\{\widetilde{Q}|_{A_n}: \widetilde{Q}\in \widetilde{\cD}\}
$. From $Q|_{A_n}=Q^r|_{A_n}\in \massL^1$, we know that $Q^r|_{A_n}$ is a weak limit point of $\widetilde{\cD}|_{A_n}$. 
Moreover, due to the fact that $\widetilde{\cD}|_{A_n}$ is convex, $Q^r|_{A_n}$ belongs to the $\massL^1$-closure of $\widetilde{\cD}|_{A_n}$. Therefore, there exists an element $Q^n\in \widetilde{\cD}$, such that 
$$
\|Q^n|_{A_n}-Q^r|_{A_n}\|_{\massL^1}<\tfrac{1}{2^n}.
$$
Finally,
$$
\|Q^n|_{A_n}-Q^r\|_{\massL^1}\leq 
\|Q^n|_{A_n}-Q^r|_{A_n}\|_{\massL^1}+\|Q^r|_{A_n}-Q^r\|_{\massL^1}\longrightarrow 0,
$$
which yields $Q^{n}\rightarrow Q^r$, $\massP$-a.s. up to a subsequence. 
\end{proof}

\begin{corollary} \label{l1linfty}
Suppose $\widetilde{\cD}$ is a convex subset of $\massL^1_+$, which is also a subset of $\linftysp$ via the canonical embedding. Denote by $\cD$ the weak-star closure of $\widetilde{\cD}$ in $\linftysp$. Then, for each $Q\in \cD$, there exists a sequence $\{Q^n\}_{n=1}^\infty\subset \widetilde{\cD}$, such that $Q^n\rightarrow Q^r$, $\massP$-a.s., as $n\rightarrow \infty$, and for countably many $f_i\in \massL^\infty$, $i\in \mathbb{N}$, 
$$
\langle Q^n, f_i\rangle \longrightarrow \langle Q, f_i\rangle,\ {\rm as}\ n\rightarrow \infty.
$$
\end{corollary}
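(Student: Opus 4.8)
\textbf{Proof proposal for Corollary~\ref{l1linfty}.}

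The plan is to bootstrap from Proposition~\ref{propcont}, which already produces a sequence converging $\massP$-a.s.\ to $Q^r$, and to upgrade it so that it simultaneously tests correctly against countably many fixed bounded functions. First I would invoke Proposition~\ref{propcont} to obtain a sequence $\{P^n\}_{n=1}^\infty\subset\widetilde\cD$ with $P^n\to Q^r$ $\massP$-a.s.\ and hence, by dominated convergence (the $P^n$ need not be dominated, so this is the delicate point — see below), at least $P^n\to Q^r$ weakly in $\massL^1$ along a subsequence. The issue is that a.s.\ convergence of the densities gives $\langle P^n, f_i\rangle\to\massE[\,\frac{dQ^r}{d\massP}f_i\,]=\langle Q^r, f_i\rangle$ only if one has uniform integrability of $\{P^n\}$, and in general $\langle Q, f_i\rangle\ne\langle Q^r, f_i\rangle$ because of the singular part. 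So a.s.\ convergence of the regular parts alone cannot force $\langle P^n,f_i\rangle\to\langle Q,f_i\rangle$; the construction must keep track of the singular mass as well.

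The right approach is therefore a diagonal/convex-combination argument run against the given data $\{f_i\}_{i\in\NN}$ directly. Fix $Q\in\cD$. Since $Q$ is a weak-star limit point of $\widetilde\cD$, for every $k\in\NN$ and every $\varepsilon>0$ the weak-star neighbourhood
$$
U_{k,\varepsilon}:=\Big\{P\in\linftys:\ |\langle P-Q,f_i\rangle|<\varepsilon\ \text{for}\ i=1,\dots,k\Big\}
$$
meets $\widetilde\cD$. Thus I can pick, for each $k$, an element $R^k\in\widetilde\cD\cap U_{k,1/k}$, so that automatically $\langle R^k,f_i\rangle\to\langle Q,f_i\rangle$ as $k\to\infty$ for every fixed $i$. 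What remains is to arrange, \emph{without destroying this property}, that a further sequence of convex combinations of the $R^k$ also converges $\massP$-a.s.\ to $Q^r$. For this I would apply Koml\'os' lemma (or Proposition~\ref{propcont}'s underlying mechanism) to the $\massL^1_+$-bounded sequence $\{R^k\}$: a sequence of convex combinations $Q^n\in\conv(R^n,R^{n+1},\dots)$ converges $\massP$-a.s.\ to some $h\in\massL^1_+$. Convex combinations preserve membership in $\widetilde\cD$ (it is convex) and preserve the limits $\langle Q^n,f_i\rangle\to\langle Q,f_i\rangle$ (a convex combination of sequences each converging to $\langle Q,f_i\rangle$ still converges to $\langle Q,f_i\rangle$). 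Finally, to identify $h=\frac{dQ^r}{d\massP}$ one reruns the localization-on-$A_n$ argument of Proposition~\ref{propcont}: on each set $A_n$ where $Q^s$ vanishes, $\{R^k|_{A_n}\}$ is $\massL^1$-bounded, its weak-star cluster point restricted to $A_n$ is $Q^r|_{A_n}\in\massL^1$, and convexity forces the $\massL^1$-norm (hence a.s.) limit of the convex combinations on $A_n$ to equal $Q^r|_{A_n}$; letting $n\to\infty$ gives $h=\frac{dQ^r}{d\massP}$ $\massP$-a.s.

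The main obstacle I anticipate is precisely the compatibility of the two requirements: the $f_i$-testing condition is a weak-star (finitely additive) statement that genuinely feels the singular part of $Q$, whereas the a.s.-convergence condition is a statement purely about regular parts, and a priori nothing links a sequence realizing one to a sequence realizing the other. The resolution is that \emph{convex combinations} are simultaneously compatible with both — they respect the convex set $\widetilde\cD$, they respect affine limits like $\langle\cdot,f_i\rangle\to\langle Q,f_i\rangle$, and (via Koml\'os/Mazur) they can be chosen to converge strongly/a.s.\ in $\massL^1$ on each $A_n$. So the key step, and the one to get right, is to \emph{first} extract the $f_i$-good sequence $\{R^k\}$ from weak-star density and \emph{then} pass to convex combinations to gain the a.s.\ convergence, checking carefully that the convex-combination step does not spoil $\langle Q^n,f_i\rangle\to\langle Q,f_i\rangle$. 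Everything else is a routine diagonalization over $i\in\NN$ and an application of the localization lemma already proved.
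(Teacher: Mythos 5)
Your first paragraph correctly diagnoses the tension --- the $f_i$-testing is a weak-star (finitely additive) condition that genuinely feels the singular part, while a.s.\ convergence is a statement about regular densities --- and correctly rejects the naive uniform-integrability route. Where the proposal breaks is in the identification step. You first pick $R^k \in \widetilde{\cD} \cap U_{k, 1/k}$, then apply Koml\'os to get convex combinations $Q^n$ converging $\massP$-a.s.\ to some $h \in \massL^1_+$, and then claim $h = \frac{dQ^r}{d\massP}$ ``by rerunning the localization-on-$A_n$ argument''. Nothing in the construction of $\{R^k\}$ forces this. The constraints $|\langle R^k - Q, f_i\rangle| < 1/k$, $i\le k$, only pin down finitely many linear functionals at each stage; they impose no control whatsoever on the $\massL^1$-position of $R^k$. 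Concretely, let $\Omega=[0,1]$, $\massP$ Lebesgue, $\widetilde{\cD}$ all probability densities, $Q=\massP$ (so $Q^r\equiv 1$, $Q^s=0$) and $f_i\equiv 1$ for every $i$. Then $U_{k,1/k}\cap\widetilde{\cD}=\widetilde{\cD}$ for all $k$, so you may take $R^k\equiv 2\cdot{\bf 1}_{[0,1/2]}$; every convex combination then equals $2\cdot{\bf 1}_{[0,1/2]}$, and the Koml\'os limit is $h=2\cdot{\bf 1}_{[0,1/2]}\neq 1$. The assertion ``its weak-star cluster point restricted to $A_n$ is $Q^r|_{A_n}$'' is simply false: the cluster points of $\{R^k\}$ are governed by your arbitrary choices within the under-constrained sets $U_{k,1/k}$, and the $f_i$ are not assumed to be separating.

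The paper avoids this by intersecting first and applying Proposition~\ref{propcont} afterwards. For each $n$ it sets
$$
\widetilde{\cD}^n := \bigcap_{i=1}^n \Big\{\widetilde{Q}\in\widetilde{\cD} : \big|\langle \widetilde{Q}, f_i\rangle - \langle Q, f_i\rangle\big| < \tfrac{1}{n}\Big\},
$$
which is again convex (intersection of $\widetilde{\cD}$ with a convex weak-star open neighbourhood of $Q$) and still has $Q$ in its weak-star closure, so Proposition~\ref{propcont} applied to $\widetilde{\cD}^n$ produces $\{Q^{n,m}\}_m\subset\widetilde{\cD}^n$ with $Q^{n,m}\to Q^r$ $\massP$-a.s.\ as $m\to\infty$. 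A diagonalization (choose $m_n$ so that $\massP(|Q^{n,m_n}-Q^r|>\tfrac1n)<2^{-n}$, then use Borel--Cantelli) yields a sequence $Q^n:=Q^{n,m_n}$ which converges a.s.\ to $Q^r$ and which, by membership in $\widetilde{\cD}^n$, satisfies $\langle Q^n, f_i\rangle\to\langle Q, f_i\rangle$ for each fixed $i$. The point you were missing is that the $f_i$-constraints must be built into the \emph{set} that Proposition~\ref{propcont} is applied to, rather than imposed on a sequence whose Koml\'os limit you cannot control.
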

\begin{proof}
For each $n$, define
$$
\widetilde{\cD}^n := \bigcap^n_{i=1}\left\{\widetilde{Q}\in \widetilde{\cD}:  | \langle \widetilde{Q} , f_i\rangle - \langle Q, f_i\rangle | < \tfrac{1}{n}         \right\}.
$$
It is evident that $Q$ belongs to the weak-star closure of $\widetilde{\cD}^n$. Applying the proposition above, one can find a sequence $\{Q^{n, m}\}_{m=1}^\infty\subset \widetilde{\cD}^n$, such that $Q^{n, m}\rightarrow Q^r$, $\massP$-a.s. By the diagonal argument, we complete the proof.
\end{proof}
\begin{remark}
We remark that the assumption that $\widetilde{\cD}\subset \massL^1$ is crucial in the above proposition. For a general subset $\widetilde{D}\subset\linftys$ and an element $Q$ in its weak-star closure $D$, one may not find a sequence $\{Q^n\}_{n=1}^\infty$ from $\widetilde{D}$, such that $(Q^n)^r\rightarrow Q^r$. For instance, $\Omega=[0, 1]$, $\mathcal{F}$ is the Lebesgue sigma-algebra and $\massP$ is the Lebesgue measure. Define $\widetilde{{D}}:=\{Q\in \linftysp: \|Q\|_{\linftys}=1, Q=Q^s\}$, then we find that statement of Proposition \ref{propcont} does not hold, since $\{Q\in \massL^1_+: \|Q\|_{\linftys}=1\}\subset \{Q\in (\massL^\infty)^*_+: \|Q\|_{\linftys}=1\}=D$.
\end{remark}

\subsection{Alternative proof of Proposition 3.2 in \cite{LZ07}}

In Section 3.2 of \cite{LZ07}, the authors show that if the stock-price process $S$ is a continuous semimartingale in the problem without random endowment formulated in \cite{KS99}, then the dual optimizer is associated with a local martingale living in the set of supermartingale deflators. Here, we shall give an alternative proof for this assertion based on the dynamics of the primal and dual solutions. We emphasize that no extra condition on the filtration $\mathcal{F}$ is assumed in this subsection.

\vspace{2mm}

Before presenting the theorem, we first introduce the following lemma, which provides us an abstract structure.
\begin{lemma}\label{keylemma}
Let $\{Y^n\infseq\subset \massL^1_+(\Omega, \cF, \massP)$ and $\{X^n\infseq\subset \massL^0(\Omega, \cF, \massP)$, where for each $n$, $X^n\geq a>0$, $\massP$-a.s. If there exists a pair of random variables $(Y, X)\in \massL^1(\Omega, \cF, \massP)\times \massL^0(\Omega, \cF, \massP)$, such that
$$
Y^n\rightarrow Y \  {\rm and}\ X\leq \liminf_n X^n,\ \massP-a.s.,
$$
$$
\massE^\massP[YX]\geq \liminf_n \massE^\massP[Y^nX^n].
$$
Then, $\{Y^n\infseq$ is uniformly integrable.
\end{lemma}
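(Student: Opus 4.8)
\textbf{Proof proposal for Lemma \ref{keylemma}.} The plan is to argue by contradiction using the de la Vallée-Poussin / Dunford-Pettis circle of ideas, extracting a ``bad'' subsequence along which mass escapes to infinity and showing that this contradicts the assumed inequality between the limit expectations. First I would suppose that $\{Y^n\}_{n=1}^\infty$ is \emph{not} uniformly integrable. By the standard characterization of uniform integrability (equivalently, the failure of the Dunford-Pettis criterion), there exist $\varepsilon>0$, a subsequence (not relabelled) and a sequence of sets $\{A_n\}_{n=1}^\infty\subset\cF$ with $\massP(A_n)\to 0$ such that $\massE^\massP[Y^n\mathbf{1}_{A_n}]\geq\varepsilon$ for every $n$. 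The idea is that on $A_n$ the variables $Y^n$ carry a fixed amount of mass while sitting on a vanishing set, and since $X^n\geq a>0$ everywhere, this mass is ``expensive'' in the pairing $\massE^\massP[Y^nX^n]$; meanwhile on the complement $A_n^c$ the contribution of $Y^n$ can only be compared to that of $Y$ via Fatou, and the two estimates together overshoot $\massE^\massP[YX]$.

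The key steps, in order, would be: (1) split $\massE^\massP[Y^nX^n]=\massE^\massP[Y^nX^n\mathbf{1}_{A_n}]+\massE^\massP[Y^nX^n\mathbf{1}_{A_n^c}]$ and bound the first term below by $a\,\massE^\massP[Y^n\mathbf{1}_{A_n}]\geq a\varepsilon$; (2) for the second term, use $X\leq\liminf_n X^n$ together with $Y^n\to Y$ $\massP$-a.s.\ and Fatou's lemma — applied to the nonnegative sequence $Y^nX^n\mathbf{1}_{A_n^c}$, noting $\mathbf{1}_{A_n^c}\to 1$ $\massP$-a.s.\ since $\massP(A_n)\to 0$ along a further subsequence — to get $\liminf_n\massE^\massP[Y^nX^n\mathbf{1}_{A_n^c}]\geq\massE^\massP[YX]$; (3) combine to obtain $\liminf_n\massE^\massP[Y^nX^n]\geq\massE^\massP[YX]+a\varepsilon$, contradicting the hypothesis $\massE^\massP[YX]\geq\liminf_n\massE^\massP[Y^nX^n]$. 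One subtlety to handle carefully in step (2): $X$ may take negative values or be merely in $\massL^0$, so $YX$ need not be integrable a priori; I would first reduce to the case $YX\in\massL^1$ by a truncation argument on $X$ (replace $X^n$ by $X^n\wedge M$ and $X$ by $X\wedge M$, for which the inequalities are preserved, then let $M\to\infty$), or simply observe that the hypothesis implicitly forces $\massE^\massP[YX]$ to be well defined and finite because it dominates $\liminf_n\massE^\massP[Y^nX^n]$.

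The main obstacle I expect is the low-regularity setting: $X^n$ and $X$ live only in $\massL^0$, the convergence $Y^n\to Y$ is only almost sure (no $\massL^1$ control is assumed — that is precisely the conclusion!), and $X\leq\liminf_n X^n$ is a one-sided, pointwise-only hypothesis. So one cannot invoke dominated convergence or any a priori integrability of $Y^nX^n$; everything must be squeezed out of Fatou's lemma on the nonnegative truncated products, plus the crude but robust lower bound $X^n\geq a$ on the escaping sets. Getting the bookkeeping right — ensuring the subsequence chosen to witness non-uniform-integrability can be further thinned so that $\mathbf{1}_{A_n^c}\to 1$ a.s.\ and Fatou applies on $A_n^c$ simultaneously with $Y^n\to Y$ a.s.\ — is the delicate point, but it is routine once set up. I would close by remarking that this lemma is exactly the abstract engine behind Proposition \ref{yui}, with $Y^n=\overline{Y}^n_\tau$, $X^n=\widetilde{W}^n_\tau$, $X=\widehat{W}_\tau$ and $a=1/k$ from Proposition \ref{stop}.
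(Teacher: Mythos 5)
Your proposal is correct and follows essentially the same route as the paper: assume failure of uniform integrability, extract sets $A_n$ with $\massP(A_n)$ summable carrying mass $\geq\varepsilon$, split $Y^n = Y^n\mathbf{1}_{A_n}+Y^n\mathbf{1}_{A_n^c}$ (the paper's $\eta^n$ and $\xi^n$), apply Fatou to $\xi^nX^n$ using $\xi^n\to Y$ a.s.\ and $X\leq\liminf_n X^n$, and bound the escaping term below by $a\varepsilon$ via $X^n\geq a$. The only cosmetic differences are that the paper chooses $\massP(A_n)<2^{-n}$ up front (Borel--Cantelli is then automatic rather than requiring the extra thinning you mention) and writes the contradiction as $\massE[YX]\leq\massE[YX]-a\varepsilon$ rather than $\liminf\massE[Y^nX^n]\geq\massE[YX]+a\varepsilon$; your side remark that finiteness of $\massE[YX]$ is implicitly needed to close the contradiction is a fair observation but does not change the argument.
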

\begin{proof}
If not, by passing to a subsequence if necessary, there exists $\varepsilon>0$, for each $n\in \mathbb{N}$, there exists $A_n$ such that $\massP(A_n)<\frac{1}{2^n}$ and $$\massE^\massP\left[Y^n{\bf 1}_{A_{n}}\right]\geq \varepsilon.$$
Define $$\eta^n:=Y^n{\bf 1}_{A_n},\ 
\xi^n:=Y^n{\bf 1}_{A_n^c}.
$$
\noindent Then $\xi^n\rightarrow Y$, a.s., while by Fatou's Lemma, we have
\begin{align*}
\massE^\massP\left[YX\right]\leq \liminf_{n\rightarrow \infty}\massE^\massP[\xi^n X^n]= \liminf_{n\rightarrow \infty}\massE^\massP\left[Y^nX^n-\eta^n X^n\right]\leq \massE^\massP\left[YX\right]-a\varepsilon,
\end{align*}
which is a contradiction.
\end{proof}
In \cite{KS99}, the primal value function can be regarded as (\ref{pp}) with $e_T\equiv 0$. On the other hand, the dual domain is defined as the solid subset generated by all terminal values of supermartingale deflators, namely, \begin{equation}\label{dualdomain}\mathcal{D}:=\{{h\in \massL^0_+(\Omega, \mathcal{F}_T, \massP): h \leq Y_T,\ {\rm for\ some}\ Y\in \mathcal{Y}(1)}\}.\end{equation}
Then, the dual problem is formulated by 
\begin{equation}\label{dpks99}
v(y):=\inf_{h\in \cD}\massE^\massP[V(yh)].
\end{equation}
It has been proved that for each $x>0$, and $\hat{y}:=u'(x)$, the value $v(y)$ is attained by a unique dual optimizer $\widehat{h}_{\hat{y}}\in \cD$, denoted by $\widehat{h}$ for short, and the primal solution $\widehat{X}_T=x+(\widehat{H}\cdot S)_T$ can be constructed in terms of $\widehat{h}$. Moreover, 
\begin{equation}\label{martpro}
\massE^\massP\left[\widehat{h}\widehat{X}_T\right]=x.
\end{equation}

Instead of Assumption \ref{assfil}, we assume 
\begin{assumption}\label{assst}
The stock-price process $S$ is continuous.
\end{assumption}
\begin{theorem}\label{a1}
Under Assumptions  \ref{NA}, \ref{U(x)assumption}, \ref{uass}, \ref{assst}, the dual optimizer of (\ref{dpks99}) obtained in \cite{KS99} is associated with a supermartingale deflator, which is a local martingale. \footnote{During the revision of this paper, Kramkov and Weston \cite{KW15} also proved a similar result but using a different technique.}
\end{theorem}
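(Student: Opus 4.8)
The plan is to mimic, with minor simplifications, the three-step scheme used for Theorem~\ref{main}, the present setting being the special case $e_T\equiv 0$ of that argument but without the Brownian hypothesis, so that all stopping-time smoothing and all uses of $e_\tau$ drop out and only Assumption~\ref{assst} is needed. First I would construct the dual optimal process. By Proposition~\ref{propcont} (applied to $\widetilde{\cD}$ the set of density processes of supermartingale deflators, regarded through the canonical embedding) one finds a sequence approximating $\widehat h$ $\massP$-a.s.; after passing to convex combinations and invoking Theorem~2.7 in \cite{CS15strong} one obtains an optional strong supermartingale $\widehat Y$ with $\widehat Y_\sigma=\massP\text{-}\lim \widetilde Y^n_\sigma$ for every $[0,T]$-valued stopping time $\sigma$ and with $\widehat Y_T=\widehat h$. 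Next, using the primal optimizer $\widehat X_T=x+(\widehat H\cdot S)_T$, I would set $\widehat X^n_t:=x+(\widehat H\cdot S)_t$ (no endowment term now), note $\widetilde Y^n\widehat X^n$ is a positive $\massP$-supermartingale, apply \cite{CS15strong} once more to get a limiting optional strong supermartingale $\widehat Z$, and define $\widehat W:=\widehat Z/\widehat Y$; as in the proof of Proposition~\ref{zmart}, the key marginal identity $\widehat Z_0=x=\massE^\massP[\widehat Z_T]$, which here comes directly from \eqref{martpro} instead of \eqref{optequ}, forces $\widehat Z$ to be a true martingale, hence continuous off a countable set of stopping times, and $\inf_{t\le T}\widehat W_t>0$ $\massP$-a.s.

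The second step is the localization. Since $S$ is continuous, $\widehat X=x+(\widehat H\cdot S)$ is continuous, and I would like $\widehat W$ to be continuous as well; here is where Assumption~\ref{assst} does real work, because continuity of $S$ (together with the Ansel--Stricker / Émery type results used in \cite{LZ07}) is what lets one take the stopping times $\sigma_k:=\inf\{t:\widehat W_t<1/k\}$ and approximate them from the left by $\sigma_{k,m}\uparrow\sigma_k$, producing $\tau_k:=\sigma_{k,m_k}\wedge T$ with $\widehat W_{\cdot\wedge\tau_k}\ge 1/k$ and $\massP(\tau_k=T)\nearrow1$, exactly as in Proposition~\ref{stop}. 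Fixing $k$ and writing $\tau=\tau_k$, Koml\'os' lemma applied to the $\massL^1$-bounded family $\{\massE^\massP[\widetilde Y^n_T\mid\cF_\tau]\}$ yields convex combinations $\{\overline Y^n\}$ with $\overline Y^n_\tau\to\widehat Y_\tau$ $\massP$-a.s., and the associated measures have a weak-star cluster point which I will call $Q^\star\in\cD$ (here $\cD$ as in \eqref{dualdom}, or rather its analogue generated by $\mathcal Y(1)$); by Proposition~A.1 in \cite{CSW01} one gets $\overline Y^n_T\to (Q^\star)^r$ $\massP$-a.s.\ and, restricting to $\cF_\tau$ as in Proposition~\ref{lower}, $\widehat Y_\tau=\tfrac{d(Q^\star|_{\cF_\tau})^r}{d\massP}$ with $(Q^\star|_{\cF_\tau})^r\sim\massP$.

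The third step closes the argument. The dynamic optimality relation now reads simply
\begin{equation*}
\langle(Q^\star|_{\cF_\tau})^r,\,x+(\widehat H\cdot S)_\tau\rangle \le \langle Q^\star|_{\cF_\tau},\,x+(\widehat H\cdot S)_\tau\rangle \le x,
\end{equation*}
the left equality being the martingale property of $\widehat Y\widehat X$ and the right inequality being $\langle Q^\star,(\widehat H\cdot S)_\tau\rangle\le 0$ from the definition of $\cD$; since these two bounds coincide with $x$ one gets $\langle(Q^\star|_{\cF_\tau})^s, x+(\widehat H\cdot S)_\tau\rangle=0$, and because $x+(\widehat H\cdot S)_\tau=\widehat W_\tau>0$ $\massP$-a.s.\ (Proposition~\ref{stop}) this forces $(Q^\star|_{\cF_\tau})^s\equiv0$, whence $\massE^\massP[\widehat Y_\tau]=\|Q^\star|_{\cF_\tau}\|=1$. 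Combined with $\overline Y^n_\tau\to\widehat Y_\tau$ $\massP$-a.s.\ and $\massE^\massP[\overline Y^n_\tau]=1$, Scheff\'e's lemma (equivalently Lemma~\ref{keylemma}, which is why that lemma is stated here) gives $\overline Y^n_\tau\to\widehat Y_\tau$ in $\massL^1$, hence $\widehat Y_{\cdot\wedge\tau_k}$ is a martingale; letting $k\to\infty$ and using $\massP(\tau_k=T)\nearrow1$ shows $\widehat Y$ is a local martingale, and the argument via Theorem~2.7 in \cite{CS15strong} (as in Lemma~4.1 of \cite{KS99}) upgrades it to a supermartingale deflator, i.e.\ $\widehat Y\in\mathcal Y(1)$. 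The main obstacle I anticipate is the continuity/regularity input in the second step: without a Brownian filtration one must genuinely exploit continuity of $S$ to control the jumps of $\widehat W$ and to approximate the level-crossing times from the left, and one should be careful that the optional strong supermartingale $\widehat Y$ may still have countably many jumps — these, however, occur on a predictable countable union of graphs and do not affect either the martingale identity at $\tau_k$ or the final localization.
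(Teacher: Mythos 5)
Your proposal arrives at the right conclusion, but it imports essentially the entire Section~4 machinery (the pair $(\widehat{Z},\widehat{W})$, the left-approximation of $\sigma_k$, Koml\'os, the weak-star cluster point $Q^\star$, the analysis of $(Q^\star|_{\cF_\tau})^s$, Scheff\'e) when the paper's point — made explicit in the remark following the theorem — is precisely that none of this is needed once $e_T\equiv 0$. The two simplifications you miss are the ones that matter. First, since $e_T\equiv 0$ and $S$ is continuous, the actual wealth process $\widehat{X}=x+(\widehat{H}\cdot S)$ is already a continuous semimartingale, so the hitting times $\sigma_k=\inf\{t:\widehat{X}_t<1/k\}$ already satisfy $\widehat{X}_{\sigma_k\wedge T}\geq 1/k$; there is no need for the Brownian-filtration predictability trick of taking announcing sequences $\sigma_{k,m}\uparrow\sigma_k$, and your remark that ``Assumption~\ref{assst} does real work'' here is misattributed. (Your own computation $\widehat{W}=\widehat{Z}/\widehat{Y}=\widehat{X}$ shows this, yet you still announce $\sigma_k$ from the left.) Second, because the weight process $\widehat{X}_{\sigma_k\wedge T}$ is the \emph{same} for the approximating sequence and for the limit, you can feed the two bounds $\massE^\massP[Y^n_{\sigma_k\wedge T}\widehat{X}_{\sigma_k\wedge T}]\le x$ (supermartingale) and $\massE^\massP[\widehat{Y}_{\sigma_k\wedge T}\widehat{X}_{\sigma_k\wedge T}]=x$ (from \eqref{martpro}) directly into Lemma~\ref{keylemma} with $X^n=X=\widehat{X}_{\sigma_k\wedge T}\ge 1/k$; this yields uniform integrability of $\{Y^n_{\sigma_k\wedge T}\}$ at a stroke and bypasses the whole $Q^\star$ construction, the restriction to $\cF_\tau$, Corollary~\ref{sing}, and the Scheff\'e step. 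In the endowment case this shortcut is unavailable because the fictional wealth processes $\widetilde{W}^n$ depend on $n$, which is exactly why Section~4 has to normalize $\massE^\massP[\widehat{Y}_\tau]=1$ via the singular-part argument; here that whole detour collapses. So your proof is not wrong, but it obscures rather than exploits the special structure of the $e_T\equiv 0$ case, and the extra layers (cluster points in $(\massL^\infty)^*$, announcing sequences) are dead weight the paper deliberately avoids.
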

\begin{proof}
Since $(\widehat{H}\cdot S)$ is a supermartingale under each $\massQ\in \cM$, similarly to (\ref{wmin}), we obtain $$\inf_{0\leq t\leq T} \widehat{X}_t>0,\ \massP-a.s.$$ By the continuity of $\widehat{X}:=x+(\widehat{H}\cdot S)$, one can thus define a sequence of stopping times as (\ref{stopping}), such that $\widehat{X}_{\sigma_k\wedge T}\geq \frac{1}{k}$. Recalling Proposition 3.2 in \cite{KS99}, for each $y>0$, the value $v(y)$ of the dual problem can be approximated by choosing a minimizing sequence $\{\massQ^n\infseq$ of ELMMs, associated with the density process $\{Y^n\infseq$, such that $Y^n_T\rightarrow \widehat{h}$, $\massP$-a.s. By Theorem 2.7 in \cite{CS15strong}, we could find a sequence of convex combinations of $\{Y^n\infseq$, and an optional strong supermartingale $\widehat{Y}$, such that $Y^n$ converges to $\widehat{Y}$ in the sense of (\ref{pro}). In particular, after passing to a subsequence,  $Y^n_{\sigma_k\wedge T}\rightarrow \widehat{Y}_{\sigma_k\wedge T}$, $Y^n_{T}\rightarrow \widehat{Y}_{T}=\widehat{h}$, $\
\massP$-a.s.~so that $\widehat{Y}_T$ 
is the dual optimizer. Moreover, applying once again Theorem 2.7 in \cite{CS15strong}, we can show that $\widehat{Y}\widehat{X}$ is an optional strong supermartingale, then we can deduce that $\widehat{Y}\widehat{X}$ is a martingale from (\ref{martpro}). Consequently, we arrive at
\begin{align}\label{compare}
\left\{\begin{array}{ll}
\massE^\massP[{Y}^n_{\sigma_k\wedge T}\widehat{X}_{\sigma_k\wedge T}]&\leq x;\\
\massE^\massP[\widehat{Y}_{\sigma_k\wedge T}\widehat{X}_{\sigma_k\wedge T}]&=x.
\end{array}\right.
\end{align}
It follows immediately by Lemma \ref{keylemma} that $\{Y^n_{\sigma_k\wedge T}\infseq$ is uniformly integrable so that $\widehat{Y}_{\cdot\wedge\sigma_k\wedge T}$ is a true martingale. We now can conclude that $\widehat{Y}$ is a local martingale, and thus is c\`adl\`ag. For any $X\in \mathcal{X}(1)$,  applying Theorem 2.7 in \cite{CS15strong} again, one can see that $X\widehat{Y}$ is still a (c\`adl\`ag) supermartingale, which implies $\widehat{Y}\in\mathcal{Y}(1)$.
\end{proof}
\begin{remark} According to \cite{KK07}, the inspection of the proofs in \cite{KS99} reveals that the usual assumption (NFLVR) can be replaced by a weaker one (NUPBR), which is equivalent to that $\cY(1)\neq \emptyset$ or the existence of ELMDs (see e.g.~\cite{Kar12}). In this case, to deduce that $\widehat{Y}$ is a local martingale, we could proceed the same as above with a minimizing sequence of ELMDs and a classical localization argument if necessary.  
\end{remark}
\begin{remark}
Compare with Section 4, the proof for the case of $e_T=0$ is much simpler, and we only need the continuity of the stock-price process rather than the assumption on the underlying filtration.
We explain as follows. Firstly, the wealth process $\widehat{X}$ in both lines of (\ref{compare}) is unified, in contrast, for the case with non-trivial $e_T$, our sequence of fictional wealth processes depends on $n$, and it is not easy to find a sequence of stopping times that stop the fictional wealth processes simultaneously to let all of them stay above 0. 
Secondly, by the continuity of $(\widehat{H}\cdot S)$, we can easily stop the process by some $\tau$ and let it be bounded away from 0, while in the other case,  we lack the continuity of $\widehat{W}$.
\end{remark}

\textbf{Acknowledgement:} The authors would like to thank Prof.~Dr.~Walter Schachermayer for his fruitful discussions and suggestions and to thank Oliver Butterley for his careful reading of an earlier version. 
 The authors also appreciate valuable comments from two anonymous referees and Kim Weston for further improvement.


\bibliography{Dualoptimizer_Rev_sub}
\bibliographystyle{plain}

\end{document}